\documentclass[12pt,a4paper]{amsart}
\usepackage{amsfonts}
\usepackage{amsthm}
\usepackage{amsmath}
\usepackage{amscd}
\usepackage[latin2]{inputenc}
\usepackage{t1enc}
\usepackage[mathscr]{eucal}
\usepackage{indentfirst}
\usepackage{graphicx}
\usepackage{graphics}
\usepackage{pict2e}
\usepackage{epic}
\numberwithin{equation}{section}
\usepackage[margin=2.9cm]{geometry}
\usepackage{epstopdf} 

\usepackage{amsthm}
\usepackage{amssymb}
\usepackage{amsmath}
\usepackage{ytableau}
\usepackage{tikz-cd}
\usepackage{mathtools}
\usepackage{enumerate}
\usepackage{wasysym}
\usepackage[framemethod=tikz]{mdframed}
\usepackage{titlesec}
\usepackage{amsaddr}

\theoremstyle{definition}
\newtheorem{mydef}{Definition}
\newtheorem{mythe}[mydef]{Theorem}

\newtheorem{cor}[mydef]{Corollary}
\newtheorem{pro}[mydef]{Proposition}
\newtheorem{lem}[mydef]{Lemma}

\newcommand\chaptername{Chapter}
\titleformat{\section}[display]
  {\normalfont\huge\bfseries\sffamily}
  {\chaptername\ \thesection}
  {20pt}
  {\Huge}
\titlespacing*{\section}
  {0pt}{50pt}{40pt}

\newcommand\Q{\mathbb{Q}}
\newcommand\Z{\mathbb{Z}}
\newcommand\N{\mathbb{N}}

\newcommand\F{\mathbb{F}}

\newcommand{\M}{\mathcal{M}}

\renewcommand{\l}{\lambda}

\newcommand{\Modd}[1]{{#1}\textrm{-Mod}}

\DeclareMathOperator{\im}{im}

\DeclareMathOperator{\Hom}{Hom}

\DeclareMathOperator{\Ext}{Ext}
\DeclareMathOperator{\Tor}{Tor}
\DeclareMathOperator{\Gh}{Gh}
\DeclareMathOperator{\ccs}{ccs}

\title{Cohomology of Burnside Rings}
\author{Benen Harrington}
\address{Department of Mathematics, University of York, York YO10 5DD, UK}
\email{bh885@york.ac.uk}

\begin{document}
\maketitle
\begin{center}{\emph{Abstract.}}\end{center}

\noindent Let $G$ be a finite group and $A(G)$ its Burnside ring. For $H \subset G$ let $\Z_H$ denote the $A(G)$-module corresponding to the mark homomorphism associated to $H$. When the order of $G$ is square-free we give a complete description of the $A(G)$-modules $\Ext^l_{A(G)}(\Z_H, \Z_J)$ and $\Tor^{A(G)}_l(\Z_H, \Z_J)$ for any $H, J \subset G$ and $l \geq 0$. We show that if the order of $G$ is not square-free then there exist $H, J \subset G$ such that $\Ext^l_{A(G)}(\Z_H, \Z_J)$ and $\Tor^{A(G)}_l(\Z_H, \Z_J)$ have unbounded rank as finite groups.
\newline

\begin{center}{\emph{Preliminaries.}}\end{center}

\noindent Let $G$ be a finite group. The isomorphism classes of finite $G$-sets form a commutative semi-ring where addition is given by disjoint union and multiplication is given by cartesian product. The Burnside ring $A(G)$ is the Grothendieck ring associated to this semi-ring. For a finite $G$-set $X$, we write $[X]$ for the corresponding isomorphism class in $A(G)$. We first recall some facts about the Burnside ring, see \cite{tomdieck} (Chapter 1) for proofs and further details.

Given a subgroup $H \subset G$, the set of left cosets $G/H$ has the natural structure of a $G$-set, where for $g, g' \in G$ and $g'H \in G/H$, we have $g\cdot g'H = gg'H$. Given $J \subset G$, the $G$-sets $G/H$ and $G/J$ are isomorphic if and only if $H$ is conjugate to $J$ in $G$. Each transitive $G$-set $X$ is isomorphic to $G/H$ for some $H \subset G$, and the Burnside ring is free on the set of isomorphism classes of transitive $G$-sets. Write $\ccs(G)$ for the set of conjugacy classes of subgroups of $G$. For $H \subset G$, write $(H)$ for the conjugacy class of subgroups to which $H$ belongs.

For a subgroup $H \subset G$ and finite $G$-set $X$, let $X^H$ denote the subset of $X$ of points fixed by $H$. The mark homomorphism $\pi_H:A(G) \to \Z$ associated to $H$ is defined by putting $\pi_H([X]) = |X^H|$ for each finite $G$-set $X$ and extending to $A(G)$. Write $\Z_H$ for the left $A(G)$-module structure on $\Z$ defined by $a\cdot n = \pi_H(a)n$ for $a \in A(G)$ and $n \in \Z$. 

\begin{lem}\label{first}
\begin{enumerate} Let $H, J$ be subgroups of $G$.
\item $\pi_H([G/H]) = [N_GH:H]$.
\item $\pi_J([G/H]) \neq 0$ if and only if $J$ is conjugate to a subgroup of $H$.
\item $\pi_H = \pi_J$ if and only if $(H) = (J)$.
\end{enumerate}
\end{lem}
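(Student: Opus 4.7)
My plan is to reduce all three statements to explicit calculations of the fixed-point set $(G/H)^J$, which is the only thing one needs to understand since $\pi_J([G/H]) = |(G/H)^J|$ by definition.

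For part (1), I would observe that a coset $gH \in G/H$ is fixed by the full subgroup $H$ under left multiplication if and only if $HgH = gH$, i.e.\ $g^{-1}Hg \subseteq H$. Since $H$ is finite this containment is an equality, so $g^{-1}Hg = H$, meaning $g \in N_G H$. Hence $(G/H)^H = N_GH/H$, which gives $\pi_H([G/H]) = [N_GH:H]$ immediately.

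For part (2), the same computation with $J$ in place of $H$ shows that $gH$ is fixed by $J$ iff $g^{-1}Jg \subseteq H$. Thus $(G/H)^J$ is nonempty precisely when some conjugate of $J$ sits inside $H$, which is the claim. Part (3) then follows cleanly from (1) and (2): the nontrivial direction assumes $\pi_H=\pi_J$, applies this equality to $[G/H]$ and to $[G/J]$, and uses (1) to know both are nonzero so that (2) forces $J$ to be conjugate to a subgroup of $H$ and $H$ to be conjugate to a subgroup of $J$; comparing orders then gives $(H)=(J)$. The easy direction uses that if $J = gHg^{-1}$ then $x \mapsto gx$ restricts to a bijection $X^H \to X^J$ for any $G$-set $X$, so $\pi_H = \pi_J$ on generators and hence on all of $A(G)$.

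None of the steps is really an obstacle; the only point requiring mild care is the finiteness argument that upgrades $g^{-1}Hg \subseteq H$ to an equality, and the order comparison at the end of (3), both of which are routine. The computation of $(G/H)^J$ in terms of conjugation is the single fact that drives the entire lemma, and I would present (1)--(3) in that order precisely so each part can reuse the previous one.
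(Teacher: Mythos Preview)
Your argument is correct and is exactly the standard proof of these facts. The paper itself does not prove this lemma at all: it is stated as background and the reader is referred to tom Dieck (Chapter~1) for details, so there is no ``paper's own proof'' to compare against. Your computation of $(G/H)^J$ via the conjugation condition $g^{-1}Jg\subseteq H$, together with the finiteness upgrade in part~(1) and the order comparison in part~(3), is precisely the argument one finds in the cited reference.
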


For each $(H) \in \ccs(G)$ we then have a well-defined homomorphism $\pi_{(H)} = \pi_H$. We have a  homomorphism of rings 
\[
\pi:A(G) \to \prod_{(H) \in \ccs(G)}\Z
\]
where for $a \in A(G)$, 
\[
a \mapsto (\pi_{(H)}(a))_{(H) \in \ccs(G)}.
\]
This is an embedding of rings. The ring $\prod_{(H)\in\ccs(G)}\Z$ is called the ghost ring of $G$. Most of our results on the Burnside ring hold more generally for arbitrary subrings of the ghost ring, and in the next section we introduce the notion of a $B$-ring in order to provide the appropriate setting for stating these results.

Let $R$ be a commutative ring. Let $\textrm{Ab}$ be the category of abelian groups and $\Modd{R}$ the category of left $R$-modules. For $R$-modules $M$, $N$, the functors $\Hom_{R}(M, -):\Modd{R} \to \textrm{Ab}$ and $\Hom_{R}(-, N):\Modd{R} \to \textrm{Ab}$ are left exact. For $l$ a non-negative integer, we write $\Ext^l_{R}(M, N)$ for the $l$th right derived functor of $\Hom_{R}(M, -)$ applied to the module $N$, or equivalently for the $l$th right derived functor of $\Hom_{R}(-,N)$ applied to the module $M$. The functor $-\otimes_{R} N:\Modd{R} \to \textrm{Ab}$ is right-exact, and we write $\Tor_l^{R}(M, N)$ for the $l$th left derived functor of $-\otimes_{R}N$ applied to the module $M$. Since $R$ is a commutative ring, each $\Ext^l_{R}(M, N)$ and $\Tor^{R}_l(M, N)$ is naturally endowed with the structure of a left $R$-module. In what follows all modules over a commutative ring are left modules.
\newline

\begin{center}\emph{$B$-rings.}\end{center}

\noindent For $I$ a finite set, define $\Gh(I) = \prod_{i \in I} \Z$. Let $R$ be a subring of $\Gh(I)$ and for each $i \in I$ let $\pi_i$ be the corresponding projection $R \to \Z$. For $r \in R$, write $r(i)$ for $\pi_i(r)$. 

\begin{mydef}\label{sepcond}
Say that $R \subset \Gh(I)$ is a $B$-ring if for each distinct pair $i, j \in I$ there exists an $r \in R$ with $r(i) \neq 0$ and $r(j) = 0$.
\end{mydef}

If some subring $R \subset \Gh(I)$ is not a $B$-ring, with $i, j \in I$ a pair for which the above condition fails, then it is clear that $r(i) = r(j)$ for all $r \in R$. Then $R$ is isomorphic to the ring $S \subset \Gh(I -\{j\})$ obtained by omitting the factor corresponding to $j$. Repeating this process if necessary we obtain a subset $I' \subset I$ and a $B$-ring $R' \subset \Gh(I')$ with $R$ isomorphic to $R'$.

We give an intrinsic definition of these rings as follows.

\begin{pro}\label{intrinsic}
A ring $S$ is isomorphic to a $B$-ring $R \subset \Gh(I)$ for some finite set $I$ if and only if $S$ is a commutative ring which is of finite rank and torsion-free as a $\Z$-module, with $\Q \otimes_\Z R$ a product of $|I|$ copies of $\Q$.
\end{pro}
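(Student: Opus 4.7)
The plan is to prove the biconditional by handling the two directions separately; the forward direction essentially unpacks the $B$-ring definition, while the backward direction requires actually constructing an embedding into a ghost ring.

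For the forward direction, I would start from a $B$-ring $R \subset \Gh(I)$ and note that $R$ is automatically commutative, torsion-free, and of finite rank as a $\Z$-module, since $\Gh(I) = \Z^I$ is. The only substantive task is to check that, inside $\Q \otimes_\Z \Gh(I) = \Q^I$, the subspace $\Q \otimes_\Z R$ is all of $\Q^I$. To do this I would show that for each $i \in I$ there is an element of $R$ supported only at $i$: applying the separating condition of Definition \ref{sepcond} to each pair $(i,j)$ with $j \neq i$ gives elements $r_{ij} \in R$ with $r_{ij}(i) \neq 0$ and $r_{ij}(j) = 0$, and then $r_i := \prod_{j \neq i} r_{ij}$ has $r_i(j) = 0$ for all $j \neq i$ and $r_i(i) \neq 0$. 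Hence $\Q \otimes_\Z R$ contains nonzero multiples of each standard basis vector $e_i$ and so equals $\Q^I$.

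For the backward direction, I would start from $S$ satisfying the stated intrinsic properties together with a fixed ring isomorphism $\varphi : \Q \otimes_\Z S \xrightarrow{\sim} \Q^{|I|}$, and label the coordinates by $I$. Composing the canonical map $S \to \Q \otimes_\Z S$ (injective because $S$ is torsion-free) with $\varphi$ and each coordinate projection $\Q^I \to \Q$ gives ring homomorphisms $\pi_i : S \to \Q$, which assemble into a map $\iota : S \to \prod_{i \in I} \Q$. The next step is to check that $\iota$ actually lands in $\Gh(I)$: each image $\pi_i(S)$ is a subring of $\Q$ containing $1$ and is finitely generated as a $\Z$-module, and any such subring must be $\Z$ (the only finitely generated subrings of $\Q$ containing $1$ are localizations at the empty set of primes). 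Injectivity of $\iota$ follows because it factors the injection $S \hookrightarrow \Q^I$.

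Finally, to verify that $R := \iota(S) \subset \Gh(I)$ is a $B$-ring, I would use that $\Q \otimes_\Z R = \Q^I$ by construction, so for each $i \in I$ the idempotent $e_i$ lies in $\Q \otimes_\Z R$ and hence $n_i e_i \in R$ for some positive integer $n_i$; this element is nonzero at $i$ and zero at every $j \neq i$, which is more than enough to satisfy Definition \ref{sepcond}. The main obstacle I anticipate is the step showing $\pi_i(S) = \Z$: this is where both the multiplicative structure on $S$ and the finite generation hypothesis are genuinely used, and without either the conclusion fails. Once this point is settled, the rest is formal.
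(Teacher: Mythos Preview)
Your proposal is correct and follows essentially the same approach as the paper's proof: both directions proceed by constructing, for each $i \in I$, an element of $R$ supported only at the $i$th coordinate (via products of the separating elements $r_{ij}$ in the forward direction, and via clearing denominators from the idempotents $e_i \in \Q \otimes R$ in the backward direction). Your treatment of the step $\pi_i(S) \subset \Z$ is in fact more explicit than the paper's, which simply asserts it from finite rank; otherwise the arguments match.
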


\begin{proof}
If $R \subset \Gh(I)$ is a $B$-ring then it is certainly commutative and torsion-free, since $\Gh(I)$ is. As a $\Z$-module $\Gh(I)$ is finitely generated, so $R$ is of finite rank. For each pair $i, j$ of distinct elements of $I$, let $r_{i, j}$ be an element of $R$ satisfying $r(i) \neq 0$ and $r(j) = 0$. Then putting $s_i = \prod_{j \neq i}r_{i, j}$ for each $i \in I$, we have $s_i(j) \neq 0$ if and only if $i = j$. Let $N = \prod_{i\in I}s_i(i)$ and $N_i = N / s_i(i)$. For $i \in I$ write $e_i$ for the corresponding primitive idempotent of $\Gh(I)$. Then
\[
N\cdot e_i = N_is_i \in R,
\]
and so $N\cdot \Gh(I) \subset R \subset \Gh(I)$. Hence 
\[
\Q\otimes_\Z R \simeq \Q \otimes_\Z \Gh(I) \simeq \prod_{i \in I} \Q,
\]
i.e.\ $\Q \otimes R$ is isomorphic to a product of $|I|$ copies of $\Q$.

Suppose $S$ is a commutative ring which is of finite rank and torsion-free as a $\Z$-module, with $\Q\otimes_\Z S$ a product of finitely copies of $\Q$. Then we have an isomorphism
\[
\theta: \Q\otimes S \to \prod_{i \in I'} \Q
\]
for some finite indexing set $I'$. 

Since $S$ is torsion-free, $\Q \otimes S$ contains a copy of $S$ as the subring $1 \otimes S \subset \Q \otimes S$. Denote the image $\theta(1\otimes S)$ by $S' \subset \prod_{i \in I'}\Q$, and for each $i \in I'$ let $\hat \pi_i$ denote the projection map $\prod_{i \in I'}\Q \to \Q$ onto the $i$th factor. Write $\pi_i$ for the restriction of $\hat\pi_i$ to $S'$. Since $S'$ is of finite rank as a $\Z$-module we must have that $\pi_i(S) \subset \Z \subset \Q$ for each $i\in I'$. We can then regard $S'$ as sitting inside $\Gh(I') = \prod_{i \in I'} \Z \subset \prod_{i \in I'} \Q$. We claim that this embedding defines a $B$-ring. For $s \in S$, write $s'$ for the element $\theta(1\otimes s)$ of $S'$. It remains to show that for each distinct pair $i, j \in I'$ we can find an element $s \in S$ such that $\pi_i(s') \neq 0$ and $\pi_j(s') = 0$. 

Let $f_1, \ldots , f_n$ be the primitive idempotents of $\Gh(I')$, and note that $\hat\pi_j(f_i) = 1$ if $j = i$ and $\hat\pi_j(f_i) = 0$ otherwise. For each $i \in I$, we have $f_i = \theta(q_i \otimes t_i)$ for some $q_i \in \Q$ and $t_i \in S$. Then $t'_i = \theta(1\otimes t_i) = (1/q_i)\cdot \theta(q_i\otimes t_i) = (1/q_i) f_i$, and so $\pi_i(t'_i) = 1/q_i \neq 0$ and $\pi_j(t'_i) = 0$ for each $j \neq i$. Thus $t'_i$ satisfies the condition of Definition \ref{sepcond} for any $j \neq i$, and the embedding $S' \subset \Gh(I')$ defines a $B$-ring.
\end{proof}

Let $R \subset \Gh(I)$ be a $B$-ring. For each $i \in I$ we have an $R$-module $\Z_i$ defined by letting $R$ act on the set $\Z$ by $r\cdot n = r(i) n$ for $r \in R$ and $n \in \Z$.

\begin{lem}
The $R$-modules $
\Ext^l_{R}(\Z_i, \Z_j)
$ and $\Tor^R_l(\Z_i, \Z_j)$ 
are finite for any $l \geq 1$ and $i, j \in I$.
\end{lem}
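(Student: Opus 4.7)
The plan is to show that $\Ext^l_R(\Z_i, \Z_j)$ and $\Tor^R_l(\Z_i, \Z_j)$ are finitely generated as $\Z$-modules and then that their rationalizations vanish in degree $l \geq 1$; a finitely generated torsion $\Z$-module is automatically finite. By Proposition \ref{intrinsic}, $R$ is finitely generated as a $\Z$-module, so every ideal of $R$ is a finitely generated $\Z$-submodule and hence a finitely generated $R$-module, i.e.\ $R$ is Noetherian. Since $\Z_i$ is cyclic as an $R$-module, it therefore admits a resolution $P_\bullet \to \Z_i$ in which each $P_l$ is a finitely generated free $R$-module, and hence also a finitely generated free $\Z$-module.

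With such a resolution chosen, $\Hom_R(P_l, \Z_j)$ embeds in the finitely generated free $\Z$-module $\Hom_\Z(P_l, \Z)$, while $P_l \otimes_R \Z_j$ is a quotient of the finitely generated $\Z$-module $P_l$. Since $\Z$ is Noetherian, both the Ext complex and the Tor complex consist of finitely generated $\Z$-modules, so their (co)homology groups $\Ext^l_R(\Z_i, \Z_j)$ and $\Tor^R_l(\Z_i, \Z_j)$ are finitely generated over $\Z$.

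For the vanishing of rationalizations I would use base change. Since each $P_l$ is a finitely generated projective $R$-module and $\Q$ is flat over $\Z$, there are natural isomorphisms $\Q \otimes_\Z \Hom_R(P_l, \Z_j) \simeq \Hom_{\Q \otimes R}(\Q \otimes P_l, \Q \otimes \Z_j)$ and analogously for the tensor complex, while $\Q \otimes P_\bullet$ remains a projective resolution of $\Q \otimes \Z_i$ over $\Q \otimes R$. Flatness of $\Q$ also lets rationalization commute with (co)homology, so
\[
\Q \otimes_\Z \Ext^l_R(\Z_i, \Z_j) \simeq \Ext^l_{\Q \otimes R}(\Q \otimes \Z_i, \Q \otimes \Z_j),
\]
and similarly for Tor. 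The proof of Proposition \ref{intrinsic} identifies $\Q \otimes R$ with $\prod_{i \in I}\Q$, a finite product of fields, hence semisimple. All higher Ext and Tor groups over $\Q \otimes R$ therefore vanish, so the rationalizations of $\Ext^l_R(\Z_i, \Z_j)$ and $\Tor^R_l(\Z_i, \Z_j)$ are zero for $l \geq 1$.

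The only delicate point is the base-change step. One needs the resolution to be by finitely generated projectives so that $\Hom_R(P_l, -)$ and $P_l \otimes_R -$ commute with $\Q \otimes_\Z -$; both commutations reduce by additivity to the case $P_l = R$, where they are immediate. Nothing else in the argument goes beyond standard homological machinery.
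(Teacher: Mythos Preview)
Your proof is correct and follows essentially the same strategy as the paper: both arguments show the groups are finitely generated over $\Z$ and then use semisimplicity of $\Q\otimes_\Z R$ together with the base-change isomorphism for $\Ext$ and $\Tor$ to conclude they are torsion, hence finite. The only difference is that you spell out the finite-generation and base-change steps in detail, whereas the paper asserts finite generation without justification and cites Weibel for the base change.
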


\begin{proof}
Note that $R_\Q := \Q\otimes_\Z R$ is semisimple. Then
\[
\Ext^l_{R_\Q}(\Q\otimes_\Z \Z_i, \Q\otimes_\Z \Z_j)  = 0
\]
for any $l\geq 1$ and $i, j \in I$. But 
\[
\Q\otimes_\Z \Ext^l_{R}(\Z_i, \Z_j) \simeq \Ext^l_{R_\Q}(\Q\otimes_\Z \Z_i, \Q\otimes_\Z \Z_j)
\]
(see e.g.\ \cite{weibel} Proposition 3.3.10) and so $\Ext^l_{R}(\Z_i, \Z_j)$ is torsion for any $l\geq 1$ and $i, j \in I$. Since it is also finitely generated, it is finite. The result for $\Tor^R_l(\Z_i, \Z_j)$ follows similarly.
\end{proof}

Let $R \subset \Gh(I)$ be a $B$-ring and $M$ a finitely generated $R$-module. Let $\mathcal{F}_\bullet$ be a free $R$-module resolution of $M$, where $\mathcal{F}_l = R^{\oplus n_l}$ for $l \geq 0$. Applying $\Hom_{R}(-, \Z_j)$ for some $j \in I$ gives a chain complex where each term is of the form $\Hom_{R}(R^{\oplus n_l}, \Z_j) \simeq \Z_j^{\oplus n_l}$. Applying $-\otimes_{R} \Z_j$ gives a chain complex where each term is of the form $R^{\oplus n_l}\otimes \Z_j \simeq \Z_j^{\oplus n_l}$. $\Ext^l_{R}(M, \Z_j)$ and $\Tor^{R}_l(M, \Z_j)$ are then isomorphic to subquotients of  $\Z_j^{\oplus n_l}$, and it follows that the $R$-module structure of each $\Ext^l_{R}(M, \Z_j)$ and $\Tor^{R}_l(M, \Z_j)$ is given by $r \in R$ acting by $r(j)$. It follows that for $i, j \in I$ and $l \geq 0$, any direct sum decomposition of $\Ext^l_{R}(\Z_i, \Z_j)$ or $\Tor^{R}_l(\Z_i, \Z_j)$ as an abelian group is automatically a decomposition as an $R$-module. 

By instead considering the functor $\Z_i \otimes_R -$ for $i \in I$, we note that for any $R$-module $N$, the $R$-module structure of $\Tor^R_l(\Z_i, N)$ for each $l \geq 0$ is given by $r$ acting by $r(i)$. Similarly, by considering an injective resolution of $N$, we have that the $R$-module structure of each $\Ext^l_{R}(\Z_i, N)$ is given by $r \in R$ acting by $r(i)$. 

\begin{mydef}For distinct $i, j \in I$, define $d(i, j)$ to be the greatest positive integer such that $r(i) \equiv r(j) \mod d(i, j)$ for each $r \in R$. 
\end{mydef}

Since the $R$-module structure of each $\Ext^l_{R}(\Z_i, \Z_j)$ and $\Tor^{R}_l(\Z_i, \Z_j)$ is given by both $r$ acting by $r(i)$ and $r$ acting by $r(j)$, it follows that each indecomposable $R$-module summand of $\Ext^l_{R}(\Z_i, \Z_j)$ and $\Tor^{R}_l(\Z_i, \Z_j)$ must be of the form $\Z_i / m\Z_i \simeq \Z_j / m \Z_j$ for some $m \mid d(i, j)$. 

Fix some rational prime $p$ and put $k = \F_p$. For a commutative ring $S$ write $\overline S$ for the quotient ring $S / pS \simeq S \otimes_{\Z}k$. For an $S$-module $M$, write $\overline{M}$ for the $\overline{S}$-module $M / pM$. If $M$ is annihilated by $p$, we will also denote the associated $\overline{S}$-module by $M$.

\begin{lem}\label{replace}
Let $S$ be a commutative ring which is free as a $\Z$-module. Let $M$ be a torsion-free $S$-module and $N$ an $S$-module annihilated by $pS$. Then
\[
\Ext^l_{S}(M, N) \simeq \Ext^l_{\overline{S}}(\overline{M}, N)
\]
and
\[
\Tor^{S}_l(M, N) \simeq \Tor^{\overline{S}}_l(\overline{M}, N)\]
for each $l \geq 0$.
\end{lem}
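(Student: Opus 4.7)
The plan is to establish both isomorphisms by showing that a projective $S$-resolution of $M$ reduces mod $p$ to a projective $\overline{S}$-resolution of $\overline{M}$. The crucial input is the short exact sequence of $S$-modules
\[
0 \to S \xrightarrow{p} S \to \overline{S} \to 0,
\]
which is exact because $S$ is $\Z$-free, so multiplication by $p$ is injective on $S$.

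First I would compute $\Tor_i^S(M, \overline{S})$. Tensoring the above sequence with $M$ over $S$ and using that $S$ is flat over itself yields $\Tor_i^S(M, \overline{S}) = 0$ for $i \geq 2$, together with $\Tor_1^S(M, \overline{S}) = \ker(M \xrightarrow{p} M)$. The hypothesis that $M$ is torsion-free, combined with $p \in S$ being a non-zero-divisor, makes this kernel zero, so $\Tor_i^S(M, \overline{S}) = 0$ for every $i \geq 1$.

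Next, choose a projective resolution $P_\bullet \to M$ over $S$. Because the higher Tor groups vanish, the complex $P_\bullet \otimes_S \overline{S}$ is acyclic and augments to $\overline{M}$. The identification $P_i \otimes_S \overline{S} \cong P_i / pP_i =: \overline{P_i}$ shows each term is $\overline{S}$-projective, since $- \otimes_S \overline{S}$ sends $S$-projective modules to $\overline{S}$-projective modules. Hence $\overline{P_\bullet} \to \overline{M}$ is a projective resolution over $\overline{S}$.

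Finally, the Ext isomorphism follows from the natural identification $\Hom_S(P_i, N) \cong \Hom_{\overline{S}}(\overline{P_i}, N)$, valid because $N$ is annihilated by $p$, so every $S$-map $P_i \to N$ factors through $\overline{P_i}$. Taking cohomology of both sides yields $\Ext^l_S(M, N) \cong \Ext^l_{\overline{S}}(\overline{M}, N)$. The Tor isomorphism follows similarly from $P_\bullet \otimes_S N \cong \overline{P_\bullet} \otimes_{\overline{S}} N$ (since $N$ is an $\overline{S}$-module, so $P_\bullet \otimes_S N = (P_\bullet \otimes_S \overline{S}) \otimes_{\overline{S}} N$) and taking homology. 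The main obstacle is the initial Tor vanishing; once $\overline{P_\bullet}$ is known to resolve $\overline{M}$, the rest is formal from the universal property of the quotient $\overline{S}$.
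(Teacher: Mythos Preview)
Your proof is correct and follows essentially the same approach as the paper: both take a projective (the paper uses free) $S$-resolution of $M$, reduce it mod $p$, and check that the result is still a resolution of $\overline{M}$ by showing the relevant Tor groups vanish, then use the adjunction $\Hom_S(P,N)\cong\Hom_{\overline S}(\overline P,N)$ for $pN=0$. The only cosmetic difference is that the paper phrases the exactness check as $\Tor^{\Z}_l(M,k)=0$ (viewing the resolution over $\Z$), whereas you phrase it as $\Tor^{S}_l(M,\overline S)=0$ via the sequence $0\to S\xrightarrow{p}S\to\overline S\to 0$; since $P_\bullet\otimes_S\overline S = P_\bullet\otimes_{\Z}k$, these are literally the same computation, both reducing to injectivity of multiplication by $p$ on $M$.
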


\begin{proof}
For any $S$-module $X$, any homomorphism of $S$-modules $\phi: X \to N$ must vanish on $pX$, and so we have an induced homomorphism $\overline{\phi}:\overline{X} \to N$. Similarly, any homomorphism of $\overline{S}$-modules $\psi: \overline{X} \to N$ lifts to a homomorphism of $S$-modules $X \to N$. It follows that 
\[
\Hom_{S}(X, N) \simeq \Hom_{\overline{S}}(\overline{X}, N).
\]

Let $(\mathcal{F}_\bullet, \partial_\bullet)$ be a free $S$-module resolution of $M$. In particular $\mathcal{F}_\bullet$ is a free $\Z$-module resolution of the $\Z$-module $M$, so applying $-\otimes_\Z k$ gives a chain complex over $\overline{M}$ with homology groups $\Tor^\Z_l(M, k)$. But $M$ is torsion-free so the homology groups vanish and the chain complex is exact. Since $\mathcal{F}_\bullet$ is a free $S$-module resolution, $\mathcal{F}_\bullet \otimes_\Z k$ is a free $\overline{S}$-module resolution.

Applying $\Hom_{\overline{S}}(-, N)$ to $\mathcal{F}_\bullet \otimes_\Z k$ and computing cohomology then computes the groups $\Ext^l_{\overline{S}}(\overline{M}, N)$. But $\Hom_{\overline{S}}(\overline{S}, N) \simeq \Hom_{S}(S, M)$, so this is the same as applying $\Hom_{S}(-, N)$ to $\mathcal{F}_\bullet$ and taking cohomology, i.e.\ computing the groups $\Ext^l_{S}(M, N)$. The proof for $\Tor^{S}_l(M, N)$ is analogous.
\end{proof}

For a $B$-ring $R \subset \Gh(I)$ and $i \in I$, write $k_i$ for the $R$-module where $r \in R$ acts on the field $k$ by $r(i)$. Put $\overline{R} = R\otimes_\Z k$.  

For each $j \in I$ we have a short exact sequence of $R$-modules
\[
0 \to \Z_j \xrightarrow{p} \Z_j \to k_j \to 0, \tag{$\dagger$}
\]
where the map $\Z_j \to \Z_j$ is given by multiplication by $p$.  Applying $\Hom_{R}(\Z_i, -)$ gives a long exact sequence beginning with
\begin{center}\begin{tikzpicture}[descr/.style={fill=white,inner sep=1.5pt}]
\matrix (m) [
            matrix of math nodes,
            ampersand replacement=\&,
            row sep=1em,
            column sep=2.5em,
            text height=1.5ex, text depth=0.25ex
        ]
        { 0 \&  \Hom_{{R}}(\Z_i, \Z_j) \& \Hom_{{R}}(\Z_i, \Z_j) \& \Hom_{\overline{{R}}}(k_i, k_{j}) \\
            \&  \Ext^1_{{R}}(\Z_i, \Z_j) \&  \Ext^1_{{R}}(\Z_i, \Z_j) \& \Ext^1_{\overline{{R}}}(k_i, k_j)  \\
            \& \Ext^2_{R}(\Z_i, \Z_j) \& \Ext^2_{R}(\Z_i, \Z_j) \& \ldots \\
        };

        \path[overlay,->, font=\scriptsize,>=latex]
        (m-1-1) edge (m-1-2)
        (m-1-2) edge (m-1-3) node[descr,yshift=1ex, xshift=10ex]  {$p$}
        (m-1-3) edge (m-1-4)
        (m-1-4) edge[out=355,in=175,] (m-2-2)
        (m-2-2) edge (m-2-3) node[descr,yshift=1ex, xshift=10ex]  {$p$}
        (m-2-3) edge (m-2-4)
        (m-2-4) edge[out=355,in=175,] (m-3-2)
	(m-3-2) edge (m-3-3) node[descr,yshift=1ex, xshift=10ex]  {$p$}
        (m-3-3) edge (m-3-4)
  
	;
	\end{tikzpicture}\end{center}
	
\noindent where we make use of the additivity of $\Ext^l_{R}(\Z_i, -)$ for each $l \geq 1$ to identify each map $\Ext^l_{R}(\Z_i, \Z_j) \to \Ext^l_{R}(\Z_i, \Z_j)$ as multiplication by $p$, and make use of Lemma \ref{replace} above to replace each $\Ext^l_{R}(\Z_i, k_j)$ with $\Ext^l_{\overline{R}}(k_i, k_j)$.	

For each $l\geq 1$ and each rational prime $q$, let $M_{l, q}$ be the submodule of $\Ext^l_{R}(\Z_i, \Z_j)$ annihilated by some power of $q$.  Since $\Ext^l_{R}(\Z_i, \Z_j)$ is finite, it follows that we have a decomposition of $R$-modules 
\[
\Ext^l_{R}(\Z_i, \Z_j) \simeq \bigoplus_q M_{l, q}
\]
where the sum is over all rational primes $q$.

For $l \geq 1$, let $a_l$ be the rank of $M_{l, p}$, and let $b_l$ be the $k$-dimension of $\Ext^l_{\overline{R}}(k_i, k_j)$. Note that $a_l$ is equal to the number of summands appearing in a decomposition of $M_{l, p}$ as a sum of indecomposable $R$-modules. For a non-zero indecomposable summand $\Z / p^\alpha \Z$ of $M_{l, p}$, the map $\Z / p^\alpha \Z \xrightarrow{p}\Z / p^\alpha \Z$ has kernel $p^{\alpha - 1} \Z / p^\alpha \Z \simeq \Z / p\Z$. So for $l \geq 1$, the kernel of the map $\left( \Ext^l_{R}(\Z_i, \Z_j) \xrightarrow{p} \Ext^l_{R}(\Z_i, \Z_j) \right)$ is a $k$-vector space with dimension $a_l$. Similarly, the cokernel of this map is a $k$-vector space of dimension $a_l$, and so the image of the connecting homomorphism $\Ext^l_{\overline{R}}(k_i, k_j) \to \Ext^{l+1}_{R}(\Z_i, \Z_j)$ has dimension $b_l - a_l$. Hence

\begin{align*}
a_{l+1} &=\dim_k \ker \left( \Ext^{l+1}_{R}(\Z_i, \Z_j) \xrightarrow{p} \Ext^{l+1}_{R}(\Z_i, \Z_j) \right) \\
&= \dim_k \im \left(  \Ext^l_{\overline{{R}}}(k_i, k_j) \to \Ext^{l+1}_{R}(\Z_i, \Z_j) \right) \\
&= b_l - a_l 
\end{align*}
for $l\geq 1$. In order to determine the sequence $a_l$, it is then sufficient to compute the sequence $b_l$. 

Applying $\Z_i \otimes_{R}-$ to $(\dagger)$ gives a long exact sequence ending with
\begin{center}\begin{tikzpicture}[descr/.style={fill=white,inner sep=1.5pt}]
\matrix (m) [
            matrix of math nodes,
            ampersand replacement=\&,
            row sep=1em,
            column sep=2.5em,
            text height=1.5ex, text depth=0.25ex
        ]
        {    \ldots \&\Tor_2^{R}(\Z_i, \Z_j) \& \Tor_2^{\overline{{R}}}(k_i, k_j) \& \\
             \Tor_1^{R}(\Z_i, \Z_j) \&  \Tor_1^{R}(\Z_i, \Z_j) \&\Tor_1^{\overline{{R}}}(k_i, k_j) \&  \\
             \Z_i\otimes \Z_j \& \Z_i\otimes \Z_j \& k_i \otimes \Z_j \& 0. \\
        };

        \path[overlay,->, font=\scriptsize,>=latex]
        (m-1-1) edge (m-1-2) 
        (m-1-2) edge (m-1-3) 
        (m-1-3) edge[out=355,in=175,] (m-2-1)
        (m-2-1) edge (m-2-2) node[descr,yshift=1ex, xshift=10ex]  {$p$}
        (m-2-2) edge (m-2-3)
        (m-2-3) edge[out=355,in=175,] (m-3-1)
	(m-3-1) edge (m-3-2) node[descr,yshift=1ex, xshift=10ex]  {$p$}
        (m-3-2) edge (m-3-3)
          (m-3-3) edge (m-3-4)

	;
	\end{tikzpicture}\end{center}
Write $z_l$ for the $p$-rank of $\Tor^{R}_l(\Z_i, \Z_j)$ and $y_l$ for the dimension $\Tor^{\overline{R}}_l(k_i, k_j)$. Then repeating the same argument as above gives 
\[
z_l = y_{l+1} - z_{l+1}
\]
for $l \geq 1$.
\newline

\begin{center}\emph{$B$-rings modulo a prime.}\end{center}

\noindent Since a $B$-ring $R\subset \Gh(I)$ is of finite rank as a $\Z$-module, $\overline{R}$ is a finite-dimensional $k$-algebra, and we have an $\overline{R}$-module decomposition of $\overline{R}$ as a direct sum of finitely many indecomposable projective $\overline{R}$-modules. Since $\overline{R}$ is commutative, this is a decomposition of commutative local $k$-algebras. By the usual block theory considerations (see e.g.\ \cite{auslander} Chapter II.5), studying the cohomology of $\overline{R}$ reduces to studying the cohomology of these indecomposable summands. 

Define a relation $\sim'_p$ on $I$ by putting $i \sim'_p j$ if and only if $p \mid d(i, j)$ for $i \neq j$. Note that by the definition of $d(i, j)$ this relation is symmetric and transitive, and we write $\sim_p$ for the equivalence relation defined by taking its reflexive closure. Let $\mathcal{E}$ denote the set of equivalence classes of $I$ with respect to $\sim_p$. For an equivalence class $E \in \mathcal{E}$, write $k_E$ for the (well-defined) $R$-module which is $k$ as an abelian group and where $r\cdot m = r(i)m$ for $m \in k$ and where $i$ is any element of $E$. 

\begin{lem}\label{replits}
For each $E \in \mathcal{E}$ there exists an $r \in R$ such that $r(i) \equiv 1 \mod p$ for each $i \in E$ and $r(j) \equiv 0 \mod p$ for each $j \notin E$. 
\end{lem}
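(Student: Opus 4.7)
The plan is to pass to the reduction $\overline{R} = R/pR$. Since $R$ has finite rank over $\Z$ by Proposition \ref{intrinsic}, $\overline{R}$ is a finite-dimensional commutative $k$-algebra and hence Artinian; as observed in the paragraph preceding the lemma, it therefore admits a block decomposition $\overline{R} = \prod_\alpha \overline{R}_\alpha$ into a finite product of local commutative $k$-algebras, with a complete set of orthogonal primitive idempotents $\{e_\alpha\}$ summing to $1$. I will identify these blocks with the equivalence classes in $\mathcal{E}$ and take $r$ to be any lift to $R$ of the primitive idempotent attached to $E$.

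For each $i \in I$, the reduced mark $\overline{\pi_i}\colon \overline{R}\to k$ is a ring surjection onto a field, so its kernel $\mathfrak{m}_i$ is a maximal ideal with residue field $k$; because $k = \F_p$ has no non-trivial automorphisms, two such maps $\overline{\pi_i}$ and $\overline{\pi_j}$ coincide iff $\mathfrak{m}_i = \mathfrak{m}_j$. By the very definition of $d(i,j)$, one has $\overline{\pi_i} = \overline{\pi_j}$ iff $p \mid d(i,j)$, iff $i \sim_p j$. Thus $i\mapsto \mathfrak{m}_i$ factors through an injection from $\mathcal{E}$ into the set of maximal ideals of $\overline{R}$, and each class $E$ selects a unique block $\overline{R}_{\alpha(E)}$ (the one whose maximal ideal is $\mathfrak{m}_i$ for any $i \in E$) together with a well-defined primitive idempotent $e_E := e_{\alpha(E)}$.

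To finish, I evaluate $\overline{\pi_j}(e_E)$ block by block. If $j \in E$ then $\overline{\pi_j}$ factors through $\overline{R}_{\alpha(E)}$, on which $e_E$ is the identity, so $\overline{\pi_j}(e_E) = 1$; if $j \notin E$ then $\overline{\pi_j}$ factors through a different block, on which $e_E$ vanishes, so $\overline{\pi_j}(e_E) = 0$. Any $r \in R$ with $r + pR = e_E$ then satisfies $r(i) \equiv 1 \pmod p$ for $i \in E$ and $r(j)\equiv 0 \pmod p$ for $j \notin E$, as required. The block decomposition itself is standard Artinian theory, and I do not anticipate a serious obstacle; the only real content is the translation, via the definition of $d(i,j)$, of the combinatorial relation $\sim_p$ into the algebraic statement that distinct $\sim_p$-classes give distinct maximal ideals of $\overline{R}$.
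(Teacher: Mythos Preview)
Your proof is correct but takes a genuinely different route from the paper's. The paper gives an elementary, hands-on construction: for each class $E'\neq E$ it uses the failure of $\sim_p$ between $E$ and $E'$ to choose some $r_{E'}\in R$ with $r_{E'}(i)\not\equiv r_{E'}(j)\pmod p$, then subtracts the constant $r_{E'}(j)$ and raises to the $(p-1)$th power to obtain an element that is $\equiv 1\pmod p$ on $E$ and $\equiv 0\pmod p$ on $E'$; the product over all $E'\neq E$ is the desired $r$. You instead pass to $\overline{R}$, invoke its Artinian block decomposition into local $k$-algebras, observe that $i\mapsto\ker\overline{\pi_i}$ sets up an injection $\mathcal{E}\hookrightarrow\{\text{maximal ideals of }\overline{R}\}$, and lift the primitive idempotent of the block corresponding to $E$.

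The paper's argument is fully self-contained and avoids any appearance of circularity with the structural results on $\overline{R}$ (Proposition~\ref{rbar} and Corollary~\ref{veryusefulcor}) that are proved \emph{using} this lemma. Your argument is more conceptual---it explains that the sought element is nothing but a lift of a block idempotent---and it is not circular either, because you rely only on the general Artinian decomposition stated just before the lemma, together with the elementary fact that a local ring has a unique maximal ideal (so distinct $\mathfrak{m}_i$ land in distinct blocks). Both approaches are valid; the paper's keeps the logical order strictly elementary, while yours foreshadows the block description that the paper derives afterwards.
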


\begin{proof}
For each equivalence class $E'$ distinct from $E$, we have 
an $r_{E'} \in R$ with $r_{E'}(i) \not\equiv r_{E'}(j) \mod p$ for $i \in E$ and $j \in E'$. Subtracting $r_{E'}(j)$ if required, we can assume $r_{E'}(j) = 0$, and hence $p \nmid r_{E'}(i)$. Replacing $r_{E'}$ by $r_{E'}^{p-1}$ if required, we can assume $r_{E'}(i) \equiv 1 \mod p$. Then putting $r = \prod_{\substack{E' \in \mathcal{E} \\ E' \neq E}}r_{E'}$ it is clear that $r$ has the claimed properties.
\end{proof}

\begin{pro}\label{rbar}
We have a surjective homomorphism of $k$-algebras
\[
\theta: \overline{R} \to \prod_{E \in \mathcal{E}} k_E
\]
with kernel the radical of $\overline{R}$. 
\end{pro}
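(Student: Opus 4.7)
The plan is to construct $\theta$ by reducing the projections $\pi_i: R \to \Z$ modulo $p$, use Lemma \ref{replits} to obtain surjectivity, and then sandwich $\ker \theta$ between $\rad(\overline R)$ (via semisimplicity of the codomain) and a nilpotent ideal of $\overline R$ (via the finite-index property of $R$ in $\Gh(I)$ established in the proof of Proposition \ref{intrinsic}).

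For the construction: for any $i \in E$ the ring map $\bar\pi_i := \pi_i \bmod p : R \to k$ depends only on $E$ by the definition of $\sim_p$, so it descends to a well-defined homomorphism $\bar\pi_E : \overline R \to k_E$, and I assemble these into the map $\theta : \overline R \to \prod_{E \in \mathcal{E}} k_E$. Surjectivity is then immediate from Lemma \ref{replits}, which exhibits an element $r_E \in R$ whose reduction maps to the primitive idempotent of $\prod_{E \in \mathcal{E}} k_E$ corresponding to $E$; these primitive idempotents span the codomain as a $k$-vector space.

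For the kernel, the inclusion $\rad(\overline R) \subseteq \ker \theta$ is automatic: the codomain $\prod_{E \in \mathcal{E}} k_E$ is a finite product of copies of $k$ and therefore a semisimple $k$-algebra, so the kernel of any surjection onto it contains the Jacobson radical. For the reverse inclusion I would show that $\ker \theta$ is a nilpotent ideal. Any element of $\ker \theta$ lifts to some $r \in R \cap p\Gh(I)$, and from the proof of Proposition \ref{intrinsic} the cokernel $C := \Gh(I)/R$ is a finite abelian group; let $p^e$ denote the exponent of its $p$-primary part. Given $r_1, \ldots, r_n \in R \cap p\Gh(I)$, write $r_1 \cdots r_n = p^n s$ with $s \in \Gh(I)$. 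Since $r_1 \cdots r_n \in R$, the class $\bar s \in C$ is $p^n$-torsion, hence lies in the $p$-primary part, hence is killed by $p^e$. Therefore $p^e s \in R$, and $r_1 \cdots r_n = p^{n-e}(p^e s) \in pR$ whenever $n > e$. This shows $(\ker \theta)^{e+1} = 0$, so $\ker \theta \subseteq \rad(\overline R)$.

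I expect the nilpotence step to be the main obstacle: the naive inclusion $(\ker \theta)^n \subseteq R \cap p^n \Gh(I)$ is not a priori contained in $pR$, and the argument really requires the $p$-exponent of $\Gh(I)/R$ to calibrate how large $n$ must be, not merely the fact that $R$ has finite index in $\Gh(I)$.
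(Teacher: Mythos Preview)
Your proof is correct and follows essentially the same route as the paper's: both construct $\theta$ from the reductions $\pi_i \bmod p$, invoke Lemma \ref{replits} for surjectivity, use semisimplicity of the codomain for $\rad(\overline R)\subseteq\ker\theta$, and then show $\ker\theta$ is nilpotent (exploiting the finite index of $R$ in $\Gh(I)$ from the proof of Proposition \ref{intrinsic}) for the reverse inclusion. The only cosmetic difference is in the nilpotence step: the paper picks explicit elements $s_i\in R$ supported at $i$, sets $t=\max_i v_p(s_i(i))$ and a multiplier $N$ coprime to $p$, and shows $Nr^{t+1}\in pR$, whereas you argue directly with the $p$-exponent $p^e$ of the finite group $\Gh(I)/R$ --- a slightly cleaner packaging of the same idea.
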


\begin{proof}
We have a homomorphism of rings 
\[
R \to \prod_{E \in \mathcal{E}} k_E
\]
given by 
\[
r \mapsto (r(i) \mod p)_{E \in \mathcal{E}}
\]
where $i \in E$. By Lemma \ref{replits} this homomorphism is surjective. The kernel of this map contains $pR$, and we let $\theta$ be the induced surjective homomorphism of $k$-algebras. Since $\prod_{E \in \mathcal{E}} k_E$ is semisimple, the kernel of $\theta$ certainly contains the radical of $\overline{R}$. It remains to show the reverse inclusion. 

As in the proof of Proposition \ref{intrinsic}, choose for each $i \in I$ an element $s_i \in R$ such that $s_i(j) \neq 0$ if and only if $j = i$. Let $s_i(i) = p^{t_i}n_i$ where $n_i$ is coprime to $p$; put $t = \max_i t_i$ and put $N = \prod_{i\in I} n_i$. Let $r \in R$ be such that $r(i) \equiv 0 \mod p$ for each $i \in I$. Putting 
$q = Nr^{t+1}$, we have that $ps_i(i) \mid q(i)$ for each $i \in I$, and so we can define integers $m_i \in \Z$ by requiring $q(i) = pm_i s_i(i)$. It follows that 
\[
q = p\cdot \sum_{i \in I} m_is_i
\]
and so $q \in pR$. Then the image of $q$ in $\overline{R}$ is zero, and hence the image of $r^{t+1}$ in $\overline{R}$ is zero, since $N$ is coprime with $p$. 

It follows that the kernel of $\theta$ is nilpotent, and hence equal to the radical of $\overline{R}$.
\end{proof}

\begin{cor}\label{veryusefulcor}
\begin{enumerate}[i.]
\item $\overline{R}$ is the direct sum of $|\mathcal{E}|$ indecomposable $k$-algebras;
\item the set $\{k_E\}_{E \in \mathcal{E}}$ is a complete irredundant set of irreducible modules for $\overline{R}$;
\item the dimension of the indecomposable summand corresponding to $k_i$ is $|E|$, the cardinality of the $\sim_p$-equivalence class of $i \in I$. The maximal ideal of each indecomposable summand has codimension 1.
\end{enumerate}
\end{cor}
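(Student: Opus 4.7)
My plan is to deduce all three statements from Proposition \ref{rbar} by treating $\overline R$ as a finite-dimensional (hence Artinian) commutative $k$-algebra whose semisimple quotient has already been identified as $\prod_{E\in\mathcal E}k_E$. Since the radical of an Artinian ring is nilpotent, the primitive idempotents of $\overline R/\rad\overline R$ lift to pairwise orthogonal idempotents $e_E\in\overline R$ summing to $1$, yielding a ring decomposition $\overline R=\bigoplus_{E\in\mathcal E}e_E\overline R$ into $|\mathcal E|$ factors. Each factor $e_E\overline R$ satisfies $e_E\overline R/e_E\rad\overline R\simeq k_E$, so is local with residue field $k$ of codimension $1$ and hence indecomposable as a $k$-algebra; this gives (i) and the second assertion of (iii). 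The simple $\overline R$-modules pull back from those of the semisimple quotient $\prod_{E\in\mathcal E}k_E$, which are exactly the $k_E$; this gives (ii).

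For the remaining dimension claim in (iii), since $R$ is torsion-free with $\Q\otimes_\Z R\simeq\prod_{i\in I}\Q$ we have $\dim_k\overline R=\mathrm{rank}_\Z R=|I|=\sum_{E\in\mathcal E}|E|$, so it suffices to show $\dim_k e_E\overline R\geq|E|$ for each $E$. To this end I would consider the composite of $R\hookrightarrow\Gh(I)$ with the coordinate projection $\Gh(I)\twoheadrightarrow\Gh(E)$, calling the image $R_E^{\mathrm{int}}$. Tensoring with $\Q$ turns this composite into the surjection $\prod_{i\in I}\Q\to\prod_{i\in E}\Q$, so $R_E^{\mathrm{int}}\subset\Gh(E)$ has $\Z$-rank $|E|$. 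The $B$-ring separation condition for $R$ restricts to $R_E^{\mathrm{int}}$, and all indices in $E$ form a single $\sim_p$-class already inside $R$, so by Proposition \ref{rbar} the algebra $\overline{R_E^{\mathrm{int}}}$ is local with residue field $k$ of total dimension $|E|$. The induced surjection $\overline R\twoheadrightarrow\overline{R_E^{\mathrm{int}}}$ sends each $e_{E'}$ ($E'\neq E$) to an idempotent whose image in the residue field of the local ring $\overline{R_E^{\mathrm{int}}}$ is $0$, hence to $0$ itself; so the surjection factors through $e_E\overline R$, giving $\dim_k e_E\overline R\geq|E|$ and closing (iii).

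The conceptual machinery --- Wedderburn-style decomposition, lifting idempotents modulo a nilpotent ideal, pulling back simples along a surjection with nilpotent kernel --- is routine once Proposition \ref{rbar} is in hand. The only step requiring real thought is the per-summand dimension count, whose key observation is that restricting the $B$-ring $R$ to the coordinates of a single equivalence class $E$ yields a new $B$-ring with exactly one $\sim_p$-class, and hence (by Proposition \ref{rbar} again) a local quotient of $\overline R$ of precisely the desired dimension $|E|$.
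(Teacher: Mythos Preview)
Your argument is correct and follows essentially the same route as the paper: parts (i) and (ii) are drawn from Proposition \ref{rbar} via the standard Artinian machinery (idempotent lifting modulo the nilpotent radical, simples pulled back along $\theta$), and your dimension count for (iii) is exactly the paper's---restrict $R$ to the coordinates in a single class $E$ to obtain a $B$-ring whose reduction mod $p$ is local of dimension $|E|$, giving $\dim_k e_E\overline R\geq|E|$, then sum over $E$ to force equality. The only difference is cosmetic: you spell out the idempotent-lifting and factoring steps where the paper simply asserts that (i) and (ii) are immediate.
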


\begin{proof}
The only part that does not follow immediately is iii. For an equivalence class $E$, let $R'$ be the $B$-ring $R' \subset \prod_{i \in E}\Z$ induced by $R$, and $\pi_E: R \to R'$ the corresponding homomorphism. Then $\pi_E$ descends to a map $\overline{R} \to \overline{R'}$ and this is a surjection of $k$-algebras. Applying part i to $\overline{R'}$, the $k$-algebra $\overline{R'}$ is indecomposable of dimension $|E|$, and so the indecomposable summand of $\overline{R}$ corresponding to $E$ has dimension $\geq |E|$. Since we can do this for each equivalence class in $\mathcal{E}$, the summand must have dimension $|E|$. Since the radical of $\overline{R}$ has dimension $\dim R - |\mathcal E|$, the radical of the summand corresponding to $E$ must have dimension $|E| - 1$.
\end{proof}

It follows that each indecomposable summand of $\overline{R}$ is a commutative local finite-dimensional $k$-algebra $S$ with maximal ideal $\mathcal{M}$ satisfying $S /\M \simeq k$. There is a unique $S$-module structure on $k$ where $\M \cdot k = 0$, and we denote this $S$-module by $k$. The following result is standard (see \cite{torbook} Chapter 2, \S 3).

\begin{lem}\label{exttor}
Let $S$ be as above. Then $\Tor^{S}_l(k, k) \simeq \Ext^l_{S}(k, k)$ as $S$-modules for each $l \geq 0$.
\end{lem}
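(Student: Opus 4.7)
The plan is to build a minimal free resolution of $k$ over $S$ and observe that both $\Tor^S_\bullet(k,k)$ and $\Ext^\bullet_S(k,k)$ read off the ranks of this resolution directly, with no cancellation from the differentials.

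Since $S$ is a finite-dimensional, hence Noetherian local commutative $k$-algebra with $S/\M \simeq k$, I would construct a free resolution $F_\bullet \to k$ in the standard way: at each stage pick a lift of a $k$-basis of $\ker(F_{l-1} \to F_{l-2}) / \M \cdot \ker(F_{l-1} \to F_{l-2})$ to get, by Nakayama's lemma, a minimal generating set of the kernel; this yields $F_l = S^{\oplus n_l}$ with $n_l$ finite and with $\partial_l(F_l) \subset \M F_{l-1}$. This minimality of the differentials is the crucial feature.

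Next, applying $-\otimes_S k$ to $F_\bullet$ gives a complex whose $l$th term is $S^{\oplus n_l} \otimes_S k \simeq k^{\oplus n_l}$, and whose differentials are induced by $\partial_l$. But $\partial_l$ lands in $\M F_{l-1}$, which maps to $\M \otimes_S k = 0$ in $k^{\oplus n_{l-1}}$, so every differential in the complex is zero. Hence $\Tor^S_l(k,k) \simeq k^{\oplus n_l}$. The same argument with $\Hom_S(-, k)$ gives a complex with terms $\Hom_S(S^{\oplus n_l}, k) \simeq k^{\oplus n_l}$; any homomorphism from $F_{l-1}$ to $k$ vanishes on $\M F_{l-1}$, hence vanishes on $\partial_l(F_l)$, so the induced differentials are again all zero. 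Thus $\Ext^l_S(k,k) \simeq k^{\oplus n_l}$ as well.

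Finally, both groups are acted on by $S$ through $S/\M \simeq k$ (this is the observation made earlier in the $B$-ring discussion, once we view $S$ as one block of some $\overline R$; more directly, since $k$ sits in both arguments, the $S$-action factors through $S/\M$). So the isomorphism of $k$-vector spaces of equal dimension $n_l$ is automatically an isomorphism of $S$-modules, giving $\Tor^S_l(k,k) \simeq \Ext^l_S(k,k)$ for each $l \geq 0$. The only slightly delicate point is the minimality claim $\partial_l(F_l) \subset \M F_{l-1}$, but this is a standard consequence of choosing generators by lifting a basis modulo $\M$ and applying Nakayama to the finitely generated kernel at each stage.
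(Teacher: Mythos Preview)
Your argument is correct: the construction of a minimal free resolution of $k$ over the Noetherian local ring $S$, followed by the observation that minimality forces all differentials to vanish after applying either $-\otimes_S k$ or $\Hom_S(-,k)$, is exactly the standard proof of this fact. The paper itself does not supply a proof at all; it simply cites Gulliksen--Levin (Chapter~2, \S 3) and states the result as known, so there is no alternative approach in the paper to compare against. What you have written is precisely the argument one finds in that reference, and your remark that the $S$-action on both sides factors through $S/\M\simeq k$ cleanly handles the $S$-module structure.
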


\begin{cor}\label{exttorcomp}
For $i, j \in I$, write $a_l$ for the $p$-rank of $\Ext_R^l(\Z_i, \Z_j)$ and $z_l$ for the $p$-rank of $\Tor^R_l(\Z_i, \Z_j)$. Then 
$
z_{l} = a_{l+1}
$
for all $l \geq 1$.
\end{cor}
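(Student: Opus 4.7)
My plan is to deduce both $p$-ranks from a single recursive relation, so that the identification $z_l = a_{l+1}$ follows from one base case. The common ingredient will be the dimensions $y_l = \dim_k \Tor^{\overline R}_l(k_i, k_j)$.

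First, I would compare $b_l = \dim_k \Ext^l_{\overline R}(k_i, k_j)$ with $y_l$. By Corollary \ref{veryusefulcor}, $\overline R$ decomposes as a product of local commutative $k$-algebras indexed by $\mathcal{E}$, and $k_i$, $k_j$ are the simple modules for the blocks corresponding to their $\sim_p$-equivalence classes. If $i \not\sim_p j$ then these simples live in different blocks, so $\Ext^l_{\overline R}(k_i, k_j) = \Tor^{\overline R}_l(k_i, k_j) = 0$ for all $l$; the two recursions $a_{l+1} = b_l - a_l$ and $z_l = y_{l+1} - z_{l+1}$ from the text then force $a_l = z_l = 0$ for $l \geq 1$, and the claim is trivial. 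If instead $i \sim_p j$, then $k_i = k_j$ and both $\Ext$ and $\Tor$ can be computed inside the single indecomposable summand of $\overline R$ containing them; applying Lemma \ref{exttor} to that summand yields $b_l = y_l$ for every $l \geq 0$.

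With $b_l = y_l$ in hand, the text already provides $a_l + a_{l+1} = y_l$ for $l \geq 1$. I would then redo the bottom of the Tor long exact sequence: since $\Tor^R_0(\Z_i, \Z_j) = \Z_i \otimes_R \Z_j$ is a finitely generated abelian group, the kernel of multiplication by $p$ on it has dimension equal to its $p$-rank, which I call $z_0$. Combined with the usual $\coker(p; \Tor_1^R(\Z_i, \Z_j))$ contribution, the tail of the sequence gives $z_0 + z_1 = y_1$, and hence $z_{l-1} + z_l = y_l$ uniformly for $l \geq 1$. Once the base case $a_1 = z_0$ is in place, a straightforward induction shows $a_{l+1} = y_l - a_l = y_l - z_{l-1} = z_l$ for all $l \geq 1$.

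The base case $a_1 = z_0$ is what I expect to be the main obstacle because it splits into edge cases. If $i \neq j$ then $\Hom_R(\Z_i, \Z_j) = 0$ by the $B$-ring separation property, and the start of the Ext long exact sequence identifies $a_1$ with $\dim_k \Hom_{\overline R}(k_i, k_j) = b_0$; on the other hand a short calculation gives $\Z_i \otimes_R \Z_j \simeq \Z / d(i, j) \Z$, whose $p$-rank is $z_0$, and both quantities equal $1$ exactly when $i \sim_p j$ and $0$ otherwise. If $i = j$ then $\Hom_R(\Z_i, \Z_i) = \Z$ and $\Z_i \otimes_R \Z_i = \Z$; here the extra $\Z/p$ coming from $\coker(p; \Hom_R(\Z_i, \Z_i))$ exactly absorbs the lone dimension of $b_0 = 1$, forcing $a_1 = 0$, while the tensor side gives $z_0 = 0$ directly. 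This matches in both cases, concluding the argument.
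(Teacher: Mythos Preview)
Your argument is correct and follows the same strategy as the paper: reduce to the case $i\sim_p j$, invoke Lemma~\ref{exttor} to get $b_l=y_l$, and then match the two recursions by checking a single base case. The only cosmetic difference is the indexing of that base case: you introduce $z_0$ (the $p$-rank of $\Z_i\otimes_R\Z_j$) and verify $a_1=z_0$, whereas the paper computes $a_1$ and $z_1$ directly in the two cases $i=j$ and $i\neq j$ and checks that $z_1=b_1-a_1=a_2$; the underlying computations with the ends of the two long exact sequences are identical.
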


\begin{proof}
We have recurrence relations 
\[
a_{l+1} = b_l - a_l
\]
and 
\[
z_l = y_{l+1} - z_{l+1}
\]
for $l \geq 1$, where $b_l$ and $y_l$ are the dimensions of the $k$-vector spaces $\Ext^l_{\overline{R}}(k_i, k_j)$ and $\Tor^{\overline{R}}_l(k_i, k_j)$ respectively. If $i \not\sim_p j$ then $\Ext^l_{\overline{R}}(k_i, k_j) = \Tor^{\overline{R}}_l(k_i, k_j) = 0$ for each $l$ and the result is trivially true. Otherwise, by Lemma \ref{exttor} we have $b_l = y_l$. 

Suppose $i = j$. It is clear that $\Hom_R(\Z_i, \Z_i) \simeq \Z_i$ and $\Hom_R(\Z_i, k_i) \simeq k_i$ so the long exact sequence associated to $(\dagger)$ begins with the short exact sequence
\[
0 \to \Hom_R(\Z_i, \Z_i) \xrightarrow{p} \Hom_R(\Z_i, \Z_i) \to \Hom_R(\Z_i, k_i) \to 0.
\]
It follows that the map $\Ext^1_R(\Z_i, \Z_i) \xrightarrow{p} \Ext^1_R(\Z_i, \Z_i)$ is injective and hence $\Ext^1_R(\Z_i, \Z_i)$ has zero $p$-part and $a_1 = 0$. Similarly, $z_1 = b_1$, and the recurrence gives $z_l = a_{l+1}$ for all $l\geq 1$ as claimed.

Suppose $i \neq j$ with $i \sim_p j$. It is clear that $\Hom_R(\Z_i, \Z_j) = 0$ and $\Hom_R(\Z_i, k_j) \simeq k_j$, so by inspection of the long exact sequence associated to $(\dagger)$ we have $a_1 = 1$. Similarly we have $\Z_i \otimes \Z_j \simeq \Z_j / d(i, j)\Z_j$ and $\Z_i \otimes k_j \simeq k_j$ from which we obtain $z_1 = b_1 -1$, and once more we have $z_l = a_{l+1}$ for each $l \geq 1$. 
\end{proof}

\begin{cor}\label{semisimplecor}
Suppose $p \nmid d(i, j)$ for all distinct $i, j \in I$. Then $\overline{R}$ is semisimple.
\end{cor}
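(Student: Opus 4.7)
The hypothesis $p \nmid d(i,j)$ for all distinct $i,j \in I$ says precisely that the relation $\sim'_p$ on $I$ is empty, so the equivalence classes in $\mathcal{E}$ under $\sim_p$ are exactly the singletons $\{i\}$ for $i \in I$. In particular $|\mathcal{E}| = |I|$.

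Given this, I would simply invoke Corollary \ref{veryusefulcor}. Part i says $\overline{R}$ decomposes as a direct sum of $|\mathcal{E}|$ indecomposable $k$-algebra summands, and part iii tells us each such summand has $k$-dimension equal to the size of the corresponding equivalence class, which is $1$. Thus each summand is a $1$-dimensional local $k$-algebra, and therefore isomorphic to $k$ itself, whence
\[
\overline{R} \simeq \prod_{i \in I} k,
\]
which is manifestly semisimple.

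As a sanity check, one can arrive at the same conclusion via Proposition \ref{rbar}: the map $\theta: \overline{R} \to \prod_{E \in \mathcal{E}} k_E$ is surjective with kernel equal to the Jacobson radical of $\overline{R}$. Since $R$ has $\Z$-rank $|I|$ (by Proposition \ref{intrinsic}), $\dim_k \overline{R} = |I|$, and under the hypothesis $\dim_k \prod_{E \in \mathcal{E}} k_E = |\mathcal{E}| = |I|$ as well, forcing $\theta$ to be an isomorphism and the radical to vanish.

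There is no real obstacle here; the corollary is essentially the degenerate case of the structural results already established, once one observes that the numerical hypothesis on the $d(i,j)$ trivialises the equivalence relation $\sim_p$.
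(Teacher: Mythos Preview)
Your proposal is correct, and your ``sanity check'' argument is precisely the paper's proof: the paper simply observes that $|\mathcal{E}| = |I|$ and concludes that the surjection $\theta$ of Proposition~\ref{rbar} is an isomorphism (your dimension count via Proposition~\ref{intrinsic} makes explicit what the paper leaves implicit). Your first argument via Corollary~\ref{veryusefulcor} is an equally valid minor variant, since that corollary is itself an immediate consequence of Proposition~\ref{rbar}.
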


\begin{proof}
Since $p \nmid d(i, j)$ for all distinct $i, j \in I$, we have $|\mathcal{E}| = |I|$ and the homomorphism $\theta$ of Proposition \ref{rbar} is an isomorphism.
\end{proof}

\begin{cor}\label{notmanylabelsleft}
Let $i, j \in I$ be distinct with $d(i, j) = 1$. Then 
\[
\Ext^l_R(\Z_i, \Z_j)  = 0
\]
for all $l \geq 0$.
\end{cor}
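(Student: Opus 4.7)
The plan is to split the statement at $l=0$ versus $l \geq 1$, and to handle the two ranges by different means, since the structural results already proved in the paper apply only in positive degree.

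For $l \geq 1$, I would invoke two facts already in place. First, the lemma following Proposition~\ref{intrinsic} guarantees that $\Ext^l_R(\Z_i, \Z_j)$ is a finite abelian group. Second, the paragraph containing the definition of $d(i,j)$ shows that any indecomposable $R$-module summand of $\Ext^l_R(\Z_i,\Z_j)$ must be isomorphic to $\Z_i / m\Z_i$ for some $m$ dividing $d(i,j)$, and that any decomposition as an abelian group is automatically one of $R$-modules. Under the hypothesis $d(i,j)=1$ the only such $m$ is $m=1$, so every indecomposable summand is zero, and hence so is $\Ext^l_R(\Z_i,\Z_j)$.

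For $l = 0$, I would compute $\Hom_R(\Z_i, \Z_j)$ by hand. Any $R$-module homomorphism $\phi : \Z_i \to \Z_j$ is determined by $\phi(1) \in \Z$, and $R$-linearity applied to $r \cdot 1 \in \Z_i$ forces $r(i)\phi(1) = r(j)\phi(1)$, i.e.\ $(r(i)-r(j))\phi(1)=0$, for every $r \in R$. The set $\{r(i) - r(j) : r \in R\}$ is a subgroup of $\Z$ (being the image of the $\Z$-linear map $r \mapsto r(i)-r(j)$), and by the definition of $d(i,j)$ it equals $d(i,j)\Z$. Under the hypothesis this subgroup is all of $\Z$, so one may pick $r \in R$ with $r(i) - r(j) = 1$, which forces $\phi(1) = 0$.

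There is no real obstacle; the only care needed is to remember that the finiteness of $\Ext^l_R(\Z_i,\Z_j)$ and the cyclic-summand classification were only established for $l \geq 1$, so that the degree-zero case genuinely requires the separate direct argument rather than an appeal to the same machinery.
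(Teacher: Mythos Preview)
Your argument is correct. The $l=0$ case is handled essentially as the paper intends (the paper just says ``immediate''), and your computation is a clean way to spell this out.

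For $l\geq 1$ you take a genuinely different route from the paper. The paper argues prime by prime: for each prime $p$, since $p\nmid d(i,j)$ the simple $\overline{R}$-modules $k_i$ and $k_j$ lie in different blocks, so $\Ext^l_{\overline{R}}(k_i,k_j)=0$; feeding $b_l=0$ into the recurrence $a_{l+1}=b_l-a_l$ (together with the vanishing of $a_1$ coming from $\Hom_R(\Z_i,\Z_j)=0$) forces the $p$-part of $\Ext^l_R(\Z_i,\Z_j)$ to vanish, and finiteness finishes the job. You instead invoke directly the earlier observation that every indecomposable summand of $\Ext^l_R(\Z_i,\Z_j)$ has the form $\Z/m\Z$ with $m\mid d(i,j)$, which under $d(i,j)=1$ kills everything at once. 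Your approach is shorter and more elementary, bypassing the mod-$p$ machinery entirely; the paper's approach, while heavier here, is the same technique that drives the later non-trivial computations, so it serves as a warm-up for that machinery. In fact your argument can be trimmed further: since $d(i,j)$ annihilates $\Ext^l_R(\Z_i,\Z_j)$ (this is exactly what the two compatible $R$-actions give), the hypothesis $d(i,j)=1$ already forces vanishing without any appeal to finiteness or to a decomposition into indecomposables.
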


\begin{proof}
The case $l=0$ is immediate.

Since $p \nmid d(i, j)$, the simples modules $k_i$ and $k_j$ are non-isomorphic and therefore belong to different blocks of the commutative $k$-algebra $\overline{R}$. Then $\Ext^l_{\overline{R}}(k_i, k_j) = 0$ for each $l \geq 0$, and so the $p$-part of $\Ext^l_R(\Z_i, \Z_j)$ is zero for each $l \geq 0$. 
Since this is true of any prime $p$, and since $\Ext^l_R(\Z_i, \Z_j)$ is finite for $l \geq 1$, it follows that $\Ext^l_R(\Z_i, \Z_j) = 0$ for all $l \geq 1$. \end{proof}

\begin{cor}\label{unicor}
Suppose that distinct elements $i, j$ of $I$ are such that $\{i, j\}$ is an equivalence class for the relation $\sim_p$ on $I$. Write $a_l$ and $a'_l$ for the $p$-ranks of $\Ext^l_R(\Z_i, \Z_i)$ and $\Ext^l_R(\Z_i, \Z_j)$ respectively. Then
\[
a_l = \begin{cases}  0&\text{ if $l$ odd} \\ 1 &\text{ if $l$ even} \end{cases}
\]
\[
a'_l = \begin{cases}  1&\text{ if $l$ odd} \\ 0 &\text{ if $l$ even} \end{cases}
\]
for all $l \geq 1$.
\end{cor}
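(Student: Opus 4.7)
The plan is to apply the recurrence $a_{l+1} = b_l - a_l$ derived in the preceding discussion, where $b_l = \dim_k \Ext^l_{\overline{R}}(k_i, k_j)$, and analogously for $a'_l$. This reduces the problem to computing the sequence $b_l$, which I would handle by identifying the block of $\overline{R}$ corresponding to $\{i,j\}$ explicitly.

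First I would pin down the relevant block. Since $\{i, j\}$ is a $\sim_p$-equivalence class, Corollary \ref{veryusefulcor}(iii) tells us that the indecomposable summand $S$ of $\overline{R}$ corresponding to it is a commutative local $k$-algebra of dimension $2$ whose maximal ideal $\M$ has dimension $1$. As $\M$ is nilpotent and $\M^2$ is a proper $k$-subspace of the one-dimensional $\M$, we must have $\M^2 = 0$, hence $S \simeq k[x]/(x^2)$. Because $\{i, j\}$ is a single equivalence class, the $\overline{R}$-modules $k_i$ and $k_j$ both coincide with the unique simple $S$-module $k$, so in either case $\Ext^l_{\overline{R}}(k_i, k_j) \simeq \Ext^l_S(k, k)$.

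Next I would compute $\Ext^l_S(k, k)$ using the standard minimal free resolution
\[
\cdots \to S \xrightarrow{x} S \xrightarrow{x} S \xrightarrow{x} S \to k \to 0.
\]
Applying $\Hom_S(-, k)$ replaces each differential by multiplication by $x$, which acts as zero on $k$. Hence $\Ext^l_S(k, k) \simeq k$ for every $l \geq 0$, so $b_l = 1$ for all $l \geq 0$.

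Finally I would feed this into the recurrence. The proof of Corollary \ref{exttorcomp} already supplies the initial values $a_1 = 0$ (in the $i = j$ case) and $a'_1 = 1$ (in the $i \neq j$ case). Unwinding $a_{l+1} = 1 - a_l$ then gives the alternating pattern $0, 1, 0, 1, \dots$ for $a_l$ and $1, 0, 1, 0, \dots$ for $a'_l$, as claimed. The only genuinely non-mechanical step is the identification $S \simeq k[x]/(x^2)$, and even that follows immediately from the dimension count in Corollary \ref{veryusefulcor}(iii); everything else is a short induction.
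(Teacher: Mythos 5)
Your argument is correct and follows the same route as the paper: identify the block for $\{i,j\}$ as the two-dimensional local algebra $k[x]/(x^2)$, compute $\Ext^l_S(k,k)\simeq k$ from the periodic minimal resolution, and unwind the recurrence $a_{l+1}=b_l-a_l$ with initial values $a_1=0$, $a'_1=1$ taken from the proof of Corollary~\ref{exttorcomp}. The only difference is that you spell out why a two-dimensional local $k$-algebra with one-dimensional maximal ideal must be $k[x]/(x^2)$ (namely $\M^2=0$), a fact the paper simply asserts.
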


\begin{proof}
The algebra $\overline{R}$ has an indecomposable 2-dimensional $k$-algebra summand corresponding to the pair $\{i, j\}$. Any indecomposable local $k$-algebra of dimension 2 with maximal ideal of codimension 1 is isomorphic to $A = k[x]/(x^2)$. We have a free $A$-module resolution of $k$ given by 
\[
\ldots \to A \to \ldots \to A \to A \to k \to 0
\]
where each map $A \to A$ is given by $1_A \mapsto x$, and so $\Ext^l_A(k, k) \simeq k$ for all $l \geq 0$, i.e.\ in the notation of our recurrence relations we have $b_l= 1$ for all $l \geq 1$. Now $a_{l+1} = b_l - a_l$ and $a'_{l+1} = b_l - a'_l$, where $a_1 = 0$ and $a'_1 = 1$, from which the corollary follows immediately.
\end{proof}
 { }~ \newline

\begin{center}\emph{The Burnside ring as $B$-ring.}\end{center}

\begin{pro}
The embedding $\pi: A(G) \to \Gh(\ccs(G))$ defines a $B$-ring.
\end{pro}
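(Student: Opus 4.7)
The plan is to verify Definition \ref{sepcond} directly: for distinct conjugacy classes $(H), (J) \in \ccs(G)$, I need to exhibit some $a \in A(G)$ with $\pi_H(a) \neq 0$ and $\pi_J(a) = 0$. The natural candidate is a nonzero integer multiple of the primitive idempotent $e_{(H)} \in \Gh(\ccs(G))$ (the vector with a $1$ in the $(H)$-coordinate and $0$ elsewhere), which by construction vanishes on every conjugacy class other than $(H)$. The strategy is therefore to exhibit some $N e_{(H)}$ in the image of $\pi$.

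To do this, I would first order the conjugacy classes $(H_1), \ldots, (H_n)$ so that $|H_1| \geq \ldots \geq |H_n|$ and examine the table of marks $M_{ij} := \pi_{H_i}([G/H_j])$. By Lemma \ref{first}(2), $M_{ij} \neq 0$ only if $H_i$ is subconjugate to $H_j$, which forces $|H_i| \leq |H_j|$. For $i < j$, combined with $|H_i| \geq |H_j|$, this gives $|H_i| = |H_j|$ and hence $(H_i) = (H_j)$, contradicting $i \neq j$. So $M_{ij} = 0$ whenever $i < j$, while $M_{ii} = [N_G H_i : H_i] > 0$ by Lemma \ref{first}(1); hence $M$ is lower triangular with nonzero diagonal, and in particular invertible over $\Q$.

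Since the matrix of $\pi$ with respect to the $\Z$-basis $\{[G/H_i]\}$ of $A(G)$ and the $\Z$-basis $\{e_{(H_i)}\}$ of $\Gh(\ccs(G))$ is precisely $M$, the induced map $\pi \otimes \Q : \Q \otimes_\Z A(G) \to \Q \otimes_\Z \Gh(\ccs(G))$ is an isomorphism. Each primitive idempotent $e_{(H)}$ therefore has a preimage $x \in \Q \otimes_\Z A(G)$; writing $x$ as a $\Q$-linear combination of the basis $\{[G/H_i]\}$ and clearing denominators produces an $N > 0$ with $Nx \in A(G)$ and $\pi(Nx) = N e_{(H)}$. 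This element satisfies $\pi_H(Nx) = N \neq 0$ and $\pi_J(Nx) = 0$ for every $(J) \neq (H)$, simultaneously separating $(H)$ from every other conjugacy class. The only nontrivial ingredient is the triangularity of the table of marks, and this drops out immediately from Lemma \ref{first}; everything else is routine linear algebra.
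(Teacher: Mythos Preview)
Your argument is correct, but it takes a different route from the paper's. The paper verifies Definition~\ref{sepcond} by an explicit two-case construction: if $J$ is not subconjugate to $H$ then $a=[G/H]$ already separates, while if $J$ is subconjugate to $H$ then $H$ is not subconjugate to $J$ and $a=[N_GJ:J][G/G]-[G/J]$ does the job. This uses only Lemma~\ref{first} and no linear algebra.

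Your approach instead establishes the global fact that the table of marks is lower triangular with nonzero diagonal (hence invertible over $\Q$), and deduces that integer multiples of \emph{every} primitive idempotent $e_{(H)}$ lie in $\pi(A(G))$. This is a stronger conclusion---essentially that $A(G)$ has finite index in its ghost ring, which the paper only obtains later (inside the proof of Proposition~\ref{intrinsic}) in the abstract $B$-ring setting. The trade-off is that the paper's argument is shorter and more self-contained, needing only a single transitive $G$-set (or a linear combination of two) to separate a given pair, whereas yours passes through $\Q\otimes_\Z A(G)$ and an implicit matrix inversion. Both are entirely standard; yours is closer to the classical ``table of marks'' presentation of the Burnside ring.
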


\begin{proof}
We need to show that for non-conjugate subgroups $H, J \subset G$ we can find an $a \in A(G)$ with $\pi_H(a) \neq 0$ and $\pi_J(a) = 0$. Now if $J$ is not conjugate to a subgroup of $H$, then $a=[G/H]$ suffices. Otherwise, if $J$ is conjugate to a subgroup of $H$ then $H$ is not conjugate to a subgroup of $J$, and putting $a = [N_GJ:J][G/G] - [G/J]$ we have $\pi_J(a) = 0$ and $\pi_H(a) = [N_GJ:J] \neq 0$.
\end{proof}

For $H \subset G$ and $p$ a prime, let $O^p(H)$ denote the smallest normal subgroup $K \triangleleft H$ such that $H/K$ is a $p$-group. We recall the following result due to Dress (\cite{dress} Proposition 1).

\begin{pro}\label{dresspro}
Let $H, J$ be subgroups of $G$. Then $\pi_H(a) \equiv \pi_J(a) \mod p$ for each $a \in A(G)$ if and only if $O^p(H)$ is conjugate to $O^p(J)$. 
\end{pro}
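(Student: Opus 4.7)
The plan is to prove both directions of the biconditional using the same key congruence: for any subgroup $H \subset G$ and any finite $G$-set $X$, one has $|X^{O^p(H)}| \equiv |X^H| \pmod p$. This follows from Burnside's fixed-point formula for $p$-groups: the quotient $P := H/O^p(H)$ is by definition a $p$-group, it acts on the set $X^{O^p(H)}$ with fixed set precisely $X^H$, and for any $p$-group action on a finite set the cardinality of the set and that of its fixed set agree modulo $p$. Extending linearly to $A(G)$ gives $\pi_H \equiv \pi_{O^p(H)} \pmod p$, and similarly for $J$. The forward implication then follows from Lemma \ref{first}.3: if $O^p(H)$ is conjugate to $O^p(J)$, then $\pi_{O^p(H)} = \pi_{O^p(J)}$, and so $\pi_H \equiv \pi_{O^p(H)} = \pi_{O^p(J)} \equiv \pi_J \pmod p$.

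For the reverse implication, suppose $\pi_H \equiv \pi_J \pmod p$. Applying the key congruence to both sides gives $\pi_{O^p(H)} \equiv \pi_{O^p(J)} \pmod p$, so it suffices to prove the statement assuming $H$ and $J$ are both $p$-perfect (i.e.\ $H = O^p(H)$ and $J = O^p(J)$) and deduce $(H) = (J)$. For this reduced case, I would invoke the structure theorem for the $p$-localized Burnside ring $A(G) \otimes \Z_{(p)}$: its primitive idempotents are in bijection with conjugacy classes of $p$-perfect subgroups, and for a $p$-perfect $K$ the corresponding idempotent $e_K \in A(G) \otimes \Z_{(p)}$ satisfies $\pi_L(e_K) = 1$ if $(O^p(L)) = (K)$ and $0$ otherwise. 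Since $p$ is not a unit in $\Z_{(p)}$, the mod-$p$ congruence $\pi_H \equiv \pi_J$ extends $\Z_{(p)}$-linearly. Evaluating on $a = e_H$ then gives $\pi_J(e_H) \equiv \pi_H(e_H) = 1 \pmod p$, forcing $\pi_J(e_H) = 1$, and hence $(J) = (O^p(J)) = (H)$.

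The main obstacle is the existence and explicit characterisation of these primitive idempotents of $A(G)\otimes \Z_{(p)}$, which is the nontrivial content of Dress's theorem (\cite{dress}). An alternative, more self-contained route for the reduced $p$-perfect case is a direct induction on subgroup order: among counterexamples $(H), (J)$ of $p$-perfect subgroups take $|H|$ maximal, then consider differences of elements $[G/K]$ for $p$-perfect $K \subseteq H$, and use a M\"obius-inversion / triangular-table-of-marks argument to produce $a \in A(G)$ on which $\pi_H$ and $\pi_J$ disagree modulo $p$. Either route isolates the same technical heart, namely separating non-conjugate $p$-perfect subgroups by their mark homomorphisms modulo $p$.
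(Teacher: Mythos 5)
The paper states Proposition \ref{dresspro} without proof, citing it directly from Dress (\cite{dress}, Proposition 1), so there is no proof in the paper against which to compare. Your proposal is essentially correct as a sketch. The $p$-group fixed-point congruence $|X^{O^p(H)}| \equiv |X^H| \pmod p$ (valid because $O^p(H) \triangleleft H$, so $H/O^p(H)$ acts on $X^{O^p(H)}$ with fixed set $X^H$) is clean and self-contained; it disposes of the forward implication and correctly reduces the reverse implication to the case where $H$ and $J$ are $p$-perfect. The $\Z_{(p)}$-linear extension of the congruence and the evaluation on the idempotent $e_H$ are also sound. But, as you acknowledge, the existence and mark-description of the primitive idempotents of $A(G)\otimes\Z_{(p)}$ is itself the substantive content of Dress's theorem, so this half of your argument is effectively the same external citation the paper makes rather than an independent proof; the alternative M\"obius-inversion route you gesture at would be needed to make it genuinely self-contained. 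In short, your treatment is more explicit than the paper's one-line citation, but for the hard direction it rests on the same result.
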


Since $A(G)$ is a $B$-ring, we have integers $d((H), (J))$ defined for each distinct pair $(H), (J) \in \ccs(G)$. For $H, J \subset G$ non-conjugate subgroups, we write $d(H, J)$ for $d((H), (J))$. It follows immediately from Proposition \ref{dresspro} that $p \mid d(H, J)$ if and only if $O^p(J)$ is conjugate to $O^p(H)$. 

\begin{cor}
Let $H, J \subset G$ be subgroups such that $O^p(H)$ is not conjugate to $O^p(J)$ for any rational prime $p$. Then 
\[
\Ext^l_{A(G)}(\Z_H, \Z_J) = 0
\]
for all $l \geq 0$.
\end{cor}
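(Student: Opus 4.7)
The plan is essentially to combine the prime-by-prime criterion of Proposition \ref{dresspro} with the vanishing result of Corollary \ref{notmanylabelsleft}, the main observation being that the hypothesis of the corollary is precisely what is needed to force $d(H,J)=1$.

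First I would verify that the hypothesis rules out $(H) = (J)$. Indeed, if $H$ were conjugate to $J$, then $O^p(H)$ would be conjugate to $O^p(J)$ for every prime $p$, contradicting the assumption. So $(H)$ and $(J)$ are distinct elements of $\ccs(G)$, and the integer $d(H,J) = d((H),(J))$ is well-defined.

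Next I would rule out, one prime at a time, that any prime divides $d(H,J)$. By Proposition \ref{dresspro} together with the remark immediately following it, a prime $p$ divides $d(H,J)$ if and only if $O^p(H)$ is conjugate to $O^p(J)$. By hypothesis this fails for every prime $p$, so no prime divides $d(H,J)$; hence $d(H,J)=1$.

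Finally, since $A(G) \subset \Gh(\ccs(G))$ is a $B$-ring and $(H)\ne(J)$ satisfy $d((H),(J))=1$, Corollary \ref{notmanylabelsleft} applied to the $B$-ring $A(G)$ yields $\Ext^l_{A(G)}(\Z_H,\Z_J) = 0$ for all $l \geq 0$, as required. There is no genuine obstacle here: the only thing to be careful about is the edge case $(H)=(J)$, which is eliminated in the first step.
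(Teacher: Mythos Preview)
Your argument is correct and is exactly the route the paper intends: the corollary is stated without proof because it follows immediately from the observation (made just before it) that $p\mid d(H,J)$ iff $O^p(H)$ is conjugate to $O^p(J)$, together with Corollary~\ref{notmanylabelsleft}. Your explicit handling of the edge case $(H)=(J)$ is a nice touch that the paper leaves implicit.
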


\begin{lem}\label{newandnifty}
$d(H, J) \mid |G|$ for each pair $H, J$ of non-conjugate subgroups of $G$.
\end{lem}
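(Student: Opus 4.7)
The plan is to reduce the statement to producing, for the given non-conjugate subgroups $H, J$, a single element $a \in A(G)$ with $\pi_H(a) - \pi_J(a) = |G|$. Since $d(H, J)$ is the positive generator of the ideal in $\Z$ generated by $\{\pi_H(a) - \pi_J(a) : a \in A(G)\}$, exhibiting any such $a$ immediately forces $d(H, J) \mid |G|$.

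The natural candidate is an integer multiple of the primitive idempotent $e_{(H)} \in \Q \otimes_\Z A(G)$, characterised by $\pi_H(e_{(H)}) = 1$ and $\pi_K(e_{(H)}) = 0$ for $(K) \neq (H)$. If one can show that $|G| \cdot e_{(H)}$ actually lies in $A(G)$, and not merely in $\Q \otimes A(G)$, then taking $a = |G| \cdot e_{(H)}$ yields $\pi_H(a) = |G|$ and $\pi_J(a) = 0$, and the lemma follows at once.

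The integrality $|G| \cdot e_{(H)} \in A(G)$ I would obtain from the classical Gluck--Dress idempotent formula, which writes $|N_G H| \cdot e_{(H)}$ as an explicit $\Z$-linear combination of the basis elements $[G/L]$, with $L$ running over subgroups of $H$ up to $G$-conjugacy and with coefficients built from the orders $|L|$ and M\"obius values on the subgroup lattice (see e.g.\ \cite{tomdieck}, Chapter 1). This formula exhibits $|N_G H| \cdot e_{(H)}$ as a genuine element of $A(G)$, and hence
\[
|G| \cdot e_{(H)} = [G : N_G H] \cdot \bigl( |N_G H| \cdot e_{(H)} \bigr) \in A(G),
\]
since $[G : N_G H]$ is a positive integer.

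The main obstacle is really the integrality statement $|G| \cdot e_{(H)} \in A(G)$; once it is granted, the rest is immediate from the definition of $d(H,J)$. If one preferred to avoid quoting the Gluck--Dress formula, one could instead invert the (triangular) table of marks and bound the denominators appearing in the columns by $|N_G K|$, using that the diagonal entries $\pi_K([G/K]) = [N_G K : K]$ both divide $|G|$ and govern the Cramer-rule denominators; this is a slightly longer but entirely elementary route to the same conclusion.
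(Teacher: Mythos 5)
Your proof is correct, but it takes a heavier route than the paper. You are right that $d(H,J)$ is the positive generator of the ideal $\{\pi_H(a) - \pi_J(a) : a \in A(G)\}$, so exhibiting any $a$ with $\pi_H(a) - \pi_J(a) = |G|$ suffices, and the Gluck--Dress formula does indeed show $|N_G H| \cdot e_{(H)} \in A(G)$, hence $|G| \cdot e_{(H)} \in A(G)$. The paper, by contrast, avoids idempotents entirely: after observing that (since $H$ and $J$ are non-conjugate) we may assume $H$ is not conjugate to a subgroup of $J$, it simply takes $a = [G/J]$ and uses Lemma~\ref{first} to get $\pi_H([G/J]) = 0$ and $\pi_J([G/J]) = [N_G J : J]$, so $d(H,J) \mid [N_G J : J]$, which divides $|G|$. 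That argument is self-contained (relying only on the basic properties of marks already recorded in the paper) and in fact yields the slightly sharper divisibility $d(H,J) \mid [N_G J : J]$. Your version buys conceptual naturality --- $|G|\,e_{(H)}$ is the canonical witness once one knows the idempotent formula --- at the cost of invoking a nontrivial external result where a single basis element $[G/J]$ would do.
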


\begin{proof}
Without loss of generality we can suppose that $H$ is not conjugate to a subgroup of $J$. Then $\pi_H([G/J]) = 0$ and $\pi_J([G/J]) = [N_GJ:J]$. It follows that $d(H, J) \mid [N_GJ:J]$ and hence $d(H, J) \mid |G|$. 
\end{proof}

\begin{cor}\label{lastlabel}
Suppose $p\mid |G|$ and $p^2 \nmid |G|$. Let $\mathcal{E}$ be the set of equivalence classes of the relation $\sim_p$ on $\ccs(G)$, and let $E_1, \ldots , E_q$ be the equivalence classes of cardinality 2. For $H \subset G$ and $l\geq 1$, write $M_{H, l}$ for the $p$-part of $\Ext^l_{A(G)}(\Z_H, \Z_H)$; for $J \subset G$ not conjugate to $H$ and $l \geq 1$, write $N_{H, J, l}$ for the $p$-part of $\Ext^l_{A(G)}(\Z_H, \Z_J)$. Then
\begin{enumerate}[i.]
\item each $E \in \mathcal{E}$ has cardinality $\leq 2$;
\item $M_{H, l}$ is non-zero for some $l \geq 1$ if and only if $(H) \in E_i$ for some $1 \leq i \leq q$;
\item $N_{H, J, l}$ is non-zero for some $l \geq 1$ if and only if $\{(H), (J)\} = E_i$ for some $1\leq i \leq q$;
\item if $(H) \in E_i$ for some $1 \leq i \leq q$ then 
\[
M_{H,l} \simeq  \begin{cases} 0  &\text{ if $l$ odd} \\ \Z / p\Z &\text{ if $l$ even}; \end{cases}
\]
\item if $\{(H), (J)\} = E_i$ for some $1\leq i \leq q$, then 
\[
N_{H, J, l} \simeq \begin{cases}  \Z / p\Z &\text{ if $l$ odd} \\ 0 &\text{ if $l$ even}. \end{cases}
\]
\end{enumerate}
\end{cor}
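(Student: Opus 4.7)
The plan is to establish (i) first, and then deduce (ii)--(v) from the Ext-computations in Corollary \ref{unicor} together with the block decomposition of Corollary \ref{veryusefulcor}.

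For (i), I invoke Dress's Proposition \ref{dresspro}: $(H) \sim_p (J)$ iff $O^p(H)$ and $O^p(J)$ are $G$-conjugate, so equivalence classes biject with $G$-conjugacy classes of subgroups $L$ arising as some $O^p(H)$ (equivalently, satisfying $O^p(L) = L$). Fixing such an $L$, any $(H)$ in its equivalence class has, up to replacing $H$ by a $G$-conjugate, $O^p(H) = L$, whence $L \triangleleft H \subset N_G(L)$ and $H/L$ is a $p$-group; since $p^2 \nmid |G|$, $|H/L| \in \{1, p\}$. The case $|H/L| = 1$ gives $H = L$, and the case $|H/L| = p$ corresponds to a subgroup of order $p$ in $N_G(L)/L$; since $p^2 \nmid |N_G(L)/L|$, Sylow's theorem says such subgroups either do not exist or form a single conjugacy class in $N_G(L)/L$. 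Since $G$-conjugacy of subgroups $H$ with a fixed $O^p(H) = L$ reduces to $N_G(L)$-conjugacy (as any conjugating element must normalise $L$), there is at most one additional conjugacy class beyond $(L)$, proving (i).

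For (ii) and (iii), the ``only if'' direction follows from the block decomposition: if $(H) \not\sim_p (J)$ then $k_H$ and $k_J$ lie in distinct blocks of $\overline{R}$ by Proposition \ref{rbar}, so $\Ext^l_{\overline{R}}(k_H, k_J) = 0$ for all $l \geq 1$, and the recurrence $a_{l+1} = b_l - a_l$ kills the $p$-part of $\Ext^l$. If $(H)$ is in a singleton equivalence class, the block of $\overline{R}$ containing $k_H$ is one-dimensional by Corollary \ref{veryusefulcor}(iii), hence isomorphic to $k$, giving $\Ext^l_{\overline{R}}(k_H, k_H) = 0$ for $l \geq 1$; again the recurrence (together with $a_1 = 0$ from the diagonal case of Corollary \ref{exttorcomp}) forces $M_{H, l} = 0$ for all $l \geq 1$. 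The ``if'' direction is precisely Corollary \ref{unicor} applied to the size-two class $E_i = \{(H), (J)\}$, which yields $p$-rank $1$ for $M_{H, l}$ at even $l$ and for $N_{H, J, l}$ at odd $l$.

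To upgrade the rank statements to the isomorphisms of (iv) and (v), I use the observation recorded in the excerpt that every indecomposable summand of $\Ext^l_{A(G)}(\Z_H, \Z_J)$ is cyclic of order dividing $d(H, J)$. By Lemma \ref{newandnifty} we have $d(H, J) \mid |G|$, and since $p^2 \nmid |G|$ the $p$-part of $d(H, J)$ is either $1$ or $p$, being exactly $p$ precisely when $(H) \sim_p (J)$. Hence any nonzero $p$-summand must be $\Z / p\Z$, and the rank-one output of Corollary \ref{unicor} promotes unambiguously to the claimed isomorphisms. The principal obstacle is (i): once the size bound on equivalence classes is in hand, parts (ii)--(v) are essentially formal consequences of the machinery already assembled in the preceding sections.
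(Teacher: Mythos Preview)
Your treatment of parts (i), (ii), (iii), and (v) is correct and matches the paper's argument closely. The gap is in part (iv), the diagonal case $M_{H,l} = \Ext^l_{A(G)}(\Z_H, \Z_H)$.

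You write that ``every indecomposable summand of $\Ext^l_{A(G)}(\Z_H, \Z_J)$ is cyclic of order dividing $d(H, J)$,'' and then invoke Lemma~\ref{newandnifty} to bound the $p$-part. But the quantity $d(i,j)$ is only defined for \emph{distinct} $i,j$, and the paper's observation that summands have order dividing $d(i,j)$ relies on the $R$-action on $\Ext^l_R(\Z_i,\Z_j)$ being simultaneously multiplication by $r(i)$ and by $r(j)$. When $i=j$ this gives no constraint at all: a priori nothing prevents a summand $\Z/p^2\Z$ (or larger) from appearing in $\Ext^l_{A(G)}(\Z_H,\Z_H)$. So your argument does not cover (iv), and Corollary~\ref{unicor} by itself only tells you the $p$-rank, not the exponent.

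The paper closes this gap with a genuinely additional step: dimension-shift via $0 \to K_H \to A(G) \to \Z_H \to 0$ to identify $M_{H,l}$ with the $p$-part of $\Ext^{l-1}_{A(G)}(K_H, \Z_H)$, and then construct an element $s_H \in A(G)$ supported only at $(H)$ with $p^2 \nmid \pi_H(s_H)$ (this uses that $p^2 \nmid d(H,J')$ for every $J'$, together with the fact that $p \mid d(H,J')$ only for the unique partner $(J)$ of $(H)$). Since $s_H$ annihilates $K_H$, it annihilates $\Ext^{l-1}(K_H,\Z_H)$; but $s_H$ acts on this module as $\pi_H(s_H)$, so $p^2$ cannot divide the order of any summand. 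You need some argument of this kind to finish (iv).
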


\begin{proof}
Suppose $E$ is an equivalence class of cardinality $\geq 3$, and consider  distinct conjugacy classes of subgroups $(H), (J) , (J')$ in $E$. Without loss of generality we can assume $O^p(J) = O^p(J') = H$. Then $J/H$ and $J'/H$ are $p$-groups in $N_GH/H$. Since $p^2 \nmid |G|$, we certainly have $p^2 \nmid |N_GH/H|$, so $J/H$ and $J'/H$ are Sylow $p$-groups in $N_GH/H$. Then $J/H$ and $J'/H$ are conjugate in $N_GH/H$, and so $J$ and $J'$ are conjugate in $G$, a contradiction. So each class $E \in \mathcal{E}$ has cardinality at most 2.

For parts ii and iii, we know by Corollary \ref{unicor} that if $(H) \in E_i$ for some $i$ then $M_{H, l}$ is non-zero whenever $l$ is even, and if $\{(H), (J)\} = E_i$ for some $i$ then $N_{H, J, l}$ is non-zero whenever $l$ is odd. Otherwise $(H) \notin E_i$ for each $i$, in which case by part i $\{(H)\}$ is an equivalence class in $\mathcal{E}$ and so the block corresponding to $H$ is 1-dimensional and $M_{H, l} = 0$ for all $l \geq 1$. Similarly, if $\{(H), (J)\} \neq E_i$ for each $i$, then $(H)$ and $(J)$ belong to distinct equivalence classes and $N_{H, J, l} = 0$ for all $l \geq 0$. 

Suppose $\{(H), (J)\}$ is an equivalence class in $\mathcal{E}$. By Corollary \ref{unicor}, $M_{H, l}$ has a $p$-power summand only when $l$ is even, and $N_{H, J, l}$ has a $p$-power summand only when $l$ is odd. It remains to show that each $p$-power summand is in fact $\Z / p\Z$. For $N_{H, J, l}$ this is clear since $d(H, J)$ annihilates $N_{H, J, l}$ and $p^2 \nmid d(H, J)$ by Lemma \ref{newandnifty}. For $M_{H, l}$ note that by dimension shifting this is the $p$-part of $\Ext^{l-1}_{A(G)}(K_H, \Z_H)$ where $K_H = \ker \pi_H$. Since $p^2 \nmid d(H, J')$ for any $J' \subset G$, and since $p \mid d(H, J')$ if and only if $(J')=(J)$, we can construct an $s_H \in A(G)$ such that $\pi_{J'}(s_H) \neq 0$ if and only if $(H) = (J')$, and such that $p^2 \nmid \pi_H(s_H)$. Now $s_H$ annihilates $K_H$, so it follows that $s_H$ annihilates $\Ext^l_{A(G)}(\Z_H, \Z_H)$, and so a fortiori $s_H$ annihilates $M_{H, l}$. So any $p$-power summand of $M_{H, l}$ must be of the form $\Z / p\Z$.
\end{proof}

\begin{mythe}\label{squarefree}
Suppose $|G|$ is square-free. Then for all $H, J \subset G$ and $l \geq 1$, we have an isomorphism of $A(G)$-modules $\Ext^l_{A(G)}(\Z_H, \Z_J) \simeq \Ext^{l+2}_{A(G)}(\Z_H, \Z_J)$.
\end{mythe}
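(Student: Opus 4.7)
The plan is to decompose each $\Ext^l_{A(G)}(\Z_H, \Z_J)$ into $p$-primary components $M_{l, p}$ and show that each such component satisfies $M_{l, p} \simeq M_{l+2, p}$, handling the primes dividing $|G|$ and those not dividing $|G|$ separately. Since $\Ext^l_{A(G)}(\Z_H, \Z_J)$ is finite for $l \geq 1$, such a decomposition exists and, as noted after Lemma \ref{replace}, is automatically a decomposition of $A(G)$-modules; equally, an abelian group isomorphism $M_{l, p} \simeq M_{l+2, p}$ lifts automatically to one of $A(G)$-modules because any indecomposable summand of the form $\Z / p\Z$ has a unique $A(G)$-action (via $\pi_H$).

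First I would dispose of the primes $p$ with $p \nmid |G|$, showing $M_{l, p} = 0$ for all $l \geq 1$. If $(H) \neq (J)$, then $d(H, J) \mid |G|$ by Lemma \ref{newandnifty}, so $p \nmid d(H, J)$; since $d(H, J)$ annihilates $\Ext^l_{A(G)}(\Z_H, \Z_J)$, its $p$-part must vanish. If $(H) = (J)$, the same Lemma gives $p \nmid d(H, J')$ for every subgroup $J'$ non-conjugate to $H$, so Corollary \ref{semisimplecor} yields that $\overline{A(G)}$ is semisimple, hence $b_l := \dim_k \Ext^l_{\overline{A(G)}}(k_H, k_H) = 0$ for all $l \geq 1$. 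Combined with the base case $a_1 = 0$ established in the proof of Corollary \ref{exttorcomp} and the recurrence $a_{l+1} = b_l - a_l$, the non-negativity of the $a_l$ then forces $a_l = 0$ for every $l \geq 1$.

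For the remaining primes $p \mid |G|$, the square-freeness of $|G|$ gives $p^2 \nmid |G|$, so Corollary \ref{lastlabel} applies directly. Its parts iv and v show that $M_{l, p}$ is one of $0$ or $\Z / p\Z$, determined entirely by the parity of $l$ together with the conjugacy relationship between $H$ and $J$; in particular $M_{l, p} \simeq M_{l+2, p}$. Summing the $p$-parts over all primes then yields $\Ext^l_{A(G)}(\Z_H, \Z_J) \simeq \Ext^{l+2}_{A(G)}(\Z_H, \Z_J)$ as $A(G)$-modules. I do not anticipate a substantial obstacle, since Corollary \ref{lastlabel} does essentially all the work for the primes dividing $|G|$; the only mildly subtle point is the case $(H) = (J)$ with $p \nmid |G|$, where the direct annihilation argument is unavailable and one must instead route through the semisimplicity of $\overline{A(G)}$ and the recurrence relating $a_l$ to $b_l$.
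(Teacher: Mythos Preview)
Your proposal is correct and matches the paper's implicit argument: the paper states Theorem~\ref{squarefree} without a separate proof, relying on the reader to assemble it from Corollary~\ref{lastlabel} together with Lemma~\ref{newandnifty} and the recurrence $a_{l+1}=b_l-a_l$, which is exactly what you do. Two small imprecisions worth tightening: for primes $p\mid |G|$ you should cite parts ii and iii of Corollary~\ref{lastlabel} alongside iv and v, since those are what cover the cases where $(H)$ (resp.\ $\{(H),(J)\}$) is not one of the $E_i$ and the $p$-part is identically zero; and the reason an abelian-group isomorphism $M_{l,p}\simeq M_{l+2,p}$ upgrades to an $A(G)$-module isomorphism is not that $\Z/p\Z$ has a unique $A(G)$-action, but that on both sides $r\in A(G)$ acts as multiplication by $\pi_J(r)$ (equivalently $\pi_H(r)$), as noted in the paragraph preceding the definition of $d(i,j)$.
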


In the remainder we establish the converse, by showing that if $|G|$ is not square-free then there exists $H, J \subset G$ such that the rank of $\Ext^l_R(\Z_H, \Z_J)$ is unbounded as $l \to \infty.$ This is an easy consequence of the following two results. Recall that for a field $F$, a commutative $F$-algebra $S$ is said to be symmetric if there exists an $F$-linear map $\l: S \to F$ such that $\ker \l$ contains no non-zero ideals of $S$.

\begin{mythe}[Gustafson \cite{gustafson}]\label{gusta}
If $p^2 \mid |G|$ and $F$ is a field of characteristic $p$ then the $F$-algebra $A(G) \otimes_\Z F$ is not symmetric. 
\end{mythe}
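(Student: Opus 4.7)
Since $\overline{A(G)} := A(G)\otimes_\Z F$ is a finite-dimensional commutative $F$-algebra, ``symmetric'' coincides with ``Frobenius''. By the block decomposition of Corollary \ref{veryusefulcor}, $\overline{A(G)}$ is a direct product of commutative local $F$-algebras $S_E$ (one for each $E\in\mathcal E$), each with residue field $F$; the Frobenius property is then equivalent to requiring $\dim_F\soc(S_E)=1$ for every $E$, since the socle of a commutative local Frobenius $F$-algebra over its residue field is $F$-dual to the top and hence $1$-dimensional. So the theorem reduces to exhibiting one class $E\in\mathcal E$ with $\dim_F\soc(S_E)\ge 2$.

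The natural target is $E_1 := \{(H)\in\ccs(G) : H \text{ is a } p\text{-subgroup of }G\}$, corresponding to the $p$-perfect subgroup $\{1\}$. Since $p^2\mid|G|$, a Sylow $p$-subgroup $P\le G$ has $|P|\ge p^2$, so $G$ contains pairwise non-conjugate $p$-subgroups of orders $1$, $p$, $p^2$, giving $\dim_F S_{E_1}\ge 3$. I would exhibit two $F$-linearly independent socle elements: $\overline{[G/1]}$ and $\overline{[G/Z]}$, where $Z\le Z(P)$ is a subgroup of order $p$ (guaranteed to exist since the center of any nontrivial $p$-group is nontrivial).

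For $\overline{[G/1]}$: the only nonzero mark is $\pi_1([G/1])=|G|\in p^2\Z$, so for any $r\in A(G)$ lifting a radical element of $S_{E_1}$ (equivalently $\pi_H(r)\in p\Z$ for every $(H)\in E_1$), the product $r[G/1]$ has every $E_1$-mark in $p^2\Z$, and its $p$-quotient inside $R':=\mathrm{image}(A(G)\to\Gh(E_1))$ is realized by the explicit element $(\pi_1(r)/p)[G/1]\in A(G)$, so $\overline r\cdot\overline{[G/1]}=0$. For $\overline{[G/Z]}$: the marks $\pi_H([G/Z])$ on $E_1$ are supported at $(1)$ and $(Z)$, taking values $|G|/p$ and $[N_GZ:Z]$ respectively, both divisible by $p$ (the first because $p^2\mid|G|$, the second because $P\le N_GZ$ since $Z\le Z(P)$, forcing $p^2\mid|N_GZ|$). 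To show $\overline{[G/Z]}\in\soc(S_{E_1})$ one needs $r[G/Z]\in pR'$ for every radical lift $r$; seeking a witness $b := c_1[G/1]+c_Z[G/Z]$ with matching $E_1$-marks, the coefficients are forced to be $c_Z=\pi_Z(r)/p$ (integer) and $c_1=(\pi_1(r)-\pi_Z(r))/p^2$, and the integrality of $c_1$ is precisely the claim $p^2\mid d(1,Z)$, a strengthening of the basic Dress congruence $p\mid d(1,Z)$ coming from the special position of $Z$ inside $Z(P)$.

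The main obstacle is verifying $p^2\mid d(1,Z)$ for our chosen $Z$. Using the mark formula $\pi_Z([G/K]) = m\,|N_GZ|/|K|$ (where $m$ is the number of subgroups of $K$ that are $G$-conjugate to $Z$), the required congruence $p^2\mid\pi_1([G/K])-\pi_Z([G/K])$ for each basis element reduces, after writing $|G|=p^r m_{p'}$ and $|N_GZ|=p^r n_{p'}$ (with $m_{p'}, n_{p'}$ coprime to $p$ and $r\ge 2$ the $p$-exponent of $|G|$), to a Sylow-counting congruence on $m_{p'}-m n_{p'}$ modulo the $p$-part of $|K|$. This Sylow analysis is the technical heart of the argument and is where the centrality of $Z$ inside $P$ is genuinely used (it ensures $P$ is itself a Sylow $p$-subgroup of $N_GZ$). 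Once this is in hand, linear independence of $\overline{[G/1]}$ and $\overline{[G/Z]}$ in $S_{E_1}$ is clear: they are reductions of distinct $\Z$-basis elements of $A(G)$ whose marks are supported on $E_1$, so they project to distinct nonzero elements of the $E_1$-block.
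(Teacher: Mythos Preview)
The paper does not prove this theorem; it is quoted from Gustafson's paper \cite{gustafson} and used as a black box in the proof of the final theorem. So there is no ``paper's own proof'' to compare against, and your attempt has to stand on its own.

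Your overall strategy (exhibit two independent socle elements in the block $S_{E_1}$ of $p$-subgroups) is reasonable, and the argument that $\overline{[G/1]}$ lies in the socle is correct. The gap is in the second element. Your claim that $\overline{[G/Z]}$ is in the socle reduces, as you say, to showing $p^2\mid(\pi_1(r)-\pi_Z(r))$ for every $r$ lifting a radical element; you phrase this as $p^2\mid d(1,Z)$. This is \emph{false} already for $G=(\Z/p\Z)^2$. Take $Z,Z'$ two distinct subgroups of order $p$; then $[G/Z']$ is in the radical (all its marks are divisible by $p$), but $\pi_1([G/Z'])-\pi_Z([G/Z'])=p-0=p$, which is not divisible by $p^2$. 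Hence $d(1,Z)=p$, and the ``Sylow-counting congruence'' you hoped to establish cannot hold. A direct computation of the multiplication table confirms that $\overline{[G/Z]}$ is \emph{not} in the socle of $\overline{A(G)}$ for $G=(\Z/p\Z)^2$: one has $\overline{[G/Z']}\cdot\overline{[G/Z]}=\overline{[G/1]}\neq 0$.

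For this particular $G$ the socle is $2$-dimensional, spanned by $\overline{[G/1]}$ and $\sum_i\overline{[G/Z_i]}$ where the sum runs over \emph{all} subgroups of order $p$; no single $\overline{[G/Z]}$ works. So to salvage your approach you would need a more subtle, $G$-dependent choice of second socle element, and the verification that it is genuinely independent of $\overline{[G/1]}$ will not be as cheap as your last paragraph suggests. Gustafson's original argument proceeds rather differently, via a direct analysis of when $A(G)\otimes F$ is self-injective.
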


\begin{mythe}[Gulliksen \cite{gulliksen}]\label{gulli}
Let $S$ be a commutative noetherian local ring with maximal ideal $\M$ and with residue field $F=S / \M$. Then the sequence $(\dim\Tor^S_l(F, F))_{l \in \N}$ is bounded if and only if 
\[
d(S) \geq \dim\M/\M^2 - 1,
\]
where $d(S)$ denotes the Krull dimension of $S$.
\end{mythe}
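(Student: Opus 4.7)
The plan is to analyse the Betti numbers $b_l := \dim_F \Tor^S_l(F,F)$, encoded in the Poincar\'e series $P_S(t) := \sum_{l \geq 0} b_l\, t^l$, and to express $P_S(t)$ in a form from which boundedness of $(b_l)$ can be read off directly.

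For the ``if'' direction, the hypothesis $d(S) \geq \dim\M/\M^2 - 1$ forces $S$ to be either regular (if the two dimensions coincide) or a hypersurface (if they differ by one). In the regular case, the Koszul complex on a regular system of parameters is a minimal $S$-free resolution of $F$, so $b_l = \binom{d(S)}{l}$, which vanishes for $l > d(S)$ and is certainly bounded. In the hypersurface case, after passing to the completion one writes $\widehat S \simeq R/(f)$ with $R$ regular local and $f \in \mathfrak{n}^2$, and Eisenbud's theory of matrix factorisations (or a direct Tate construction) produces a minimal $S$-free resolution of $F$ that becomes $2$-periodic from homological degree $d(S)$ onwards, so $(b_l)$ is eventually constant.

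For the ``only if'' direction, the key tool is Tate's acyclic closure $S\langle X \rangle \to F$: a DG-algebra free resolution of $F$ built by iteratively adjoining exterior generators in odd degrees and divided-power generators in even degrees to kill cycles. Each adjoined generator in degree $i$ multiplies $P_S(t)$ by $(1+t^i)$ if $i$ is odd and by $(1-t^i)^{-1}$ if $i$ is even; denoting by $\varepsilon_i$ the number of generators of degree $i$, one obtains Gulliksen's product formula
\[
P_S(t) \;=\; \prod_{i \geq 1} \frac{(1+t^{2i-1})^{\varepsilon_{2i-1}}}{(1-t^{2i})^{\varepsilon_{2i}}}.
\]
One checks $\varepsilon_1 = \dim\M/\M^2$ and, by Gulliksen's characterisation of complete intersections, $\varepsilon_i = 0$ for all $i \geq 3$ iff $S$ is a complete intersection; in that case $\varepsilon_2$ equals the codimension $c := \dim\M/\M^2 - d(S)$. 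If $S$ is a complete intersection with $c \geq 2$, the product collapses to $(1+t)^{\dim\M/\M^2}(1-t^2)^{-c}$, whose coefficients grow polynomially of degree $c - 1 \geq 1$. If $S$ is not a complete intersection, then infinitely many $\varepsilon_i$ are nonzero and a radius-of-convergence argument applied to the product formula forces $(b_l)$ to grow exponentially. Either way $(b_l)$ is unbounded.

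The main obstacle is setting up Tate's acyclic closure and establishing the product formula: this requires the DG-algebra machinery of free extensions by skew-commutative exterior variables and divided-power polynomial variables, together with the verification (via a K\"unneth-type argument) that each such extension multiplies the Poincar\'e series by the stated factor. Given this input, the characterisation of boundedness is an immediate inspection of the rational (respectively non-rational) form of $P_S(t)$.
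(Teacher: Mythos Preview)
The paper does not prove this theorem at all: it is quoted from Gulliksen's 1967 note (reference~\cite{gulliksen}) and used as a black box in the proof of the final theorem. There is therefore no argument in the paper to compare your proposal against.

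That said, your outline is broadly correct and is very much in the spirit of the methods Gulliksen himself used (and which are developed at length in the Gulliksen--Levin monograph cited as~\cite{torbook}): Tate's acyclic closure, the product formula for $P_S(t)$ in terms of the deviations $\varepsilon_i$, and the identification $\varepsilon_1=\dim\M/\M^2$, $\varepsilon_2=\dim\M/\M^2-d(S)$ in the complete-intersection case. Two small comments on the ``only if'' direction. First, the assertion that a non--complete-intersection has infinitely many nonzero deviations is itself a substantial theorem (Gulliksen's rigidity, later sharpened by Halperin), so you are importing a result of comparable depth to the one being proved. Second, a radius-of-convergence argument is more than you need and is not quite immediate from the product formula; it is enough to observe that once at least two even deviations are nonzero the product contains a factor of the form $(1-t^{a})^{-1}(1-t^{b})^{-1}$, whose coefficients already grow without bound. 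With those inputs granted, your sketch goes through.
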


\begin{mythe}
If $|G|$ is not square-free then there exist subgroups $H, J \subset G$ such that the groups $\Ext^l_{A(G)}(\Z_H, \Z_J)$ have unbounded rank.
\end{mythe}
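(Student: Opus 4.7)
The strategy is to combine Gustafson's and Gulliksen's theorems via the block decomposition of $\overline{A(G)}$ developed in the previous section. Since $|G|$ is not square-free, I first choose a rational prime $p$ with $p^2 \mid |G|$, so Theorem \ref{gusta} gives that $\overline{A(G)}$ is not symmetric as an $\F_p$-algebra. By Corollary \ref{veryusefulcor} we may decompose $\overline{A(G)} \simeq \prod_{E \in \mathcal{E}} S_E$ into indecomposable commutative local finite-dimensional $k$-algebras, each with residue field $k$. A finite direct product of symmetric $k$-algebras is symmetric (the sum of the component linear forms is a symmetrising form on the product, since any nonzero ideal of the product contains a nonzero ideal of one factor), so at least one block $S$ in this decomposition must fail to be symmetric. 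Let $E$ be the corresponding equivalence class and fix any $H \subset G$ with $(H) \in E$; then $k_H$ is the unique simple module for $S$.

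Next I extract unbounded cohomological growth from the asymmetry of $S$. Since $S$ is a finite-dimensional local $k$-algebra it is artinian, hence noetherian with Krull dimension $d(S) = 0$. If the maximal ideal $\M$ of $S$ satisfied $\dim_k \M/\M^2 \leq 1$, then Nakayama would force $\M$ to be principal and hence $S \simeq k[x]/(x^n)$ for some $n \geq 1$, which is symmetric via the linear form reading off the coefficient of $x^{n-1}$; this contradicts the choice of $S$. Therefore $\dim_k \M/\M^2 \geq 2$, and Gulliksen's criterion (Theorem \ref{gulli}) applies with $d(S) = 0 < 1 \leq \dim_k \M/\M^2 - 1$ to yield that the sequence $(\dim_k \Tor^S_l(k,k))_{l \in \N}$ is unbounded. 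Applying Lemma \ref{exttor} then gives that $(\dim_k \Ext^l_S(k,k))_{l \in \N}$ is unbounded as well.

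Finally I transfer this back to the Burnside ring. Because $k_H$ is killed by every block of $\overline{A(G)}$ other than $S$, any projective resolution of $k_H$ over $\overline{A(G)}$ restricts to a projective resolution over $S$, so $\Ext^l_{\overline{A(G)}}(k_H, k_H) \simeq \Ext^l_S(k, k)$ for each $l$; in the notation of the previous section this says the sequence $b_l$ is unbounded in $l$. The recurrence $a_{l+1} = b_l - a_l$ from the $B$-ring analysis, together with $a_l \geq 0$, gives $a_l + a_{l+1} = b_l$, which forces the $p$-ranks $a_l$ of $\Ext^l_{A(G)}(\Z_H, \Z_H)$ to be unbounded. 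Taking $J = H$ yields the theorem. The main obstacle is identifying a block which cannot be of the form $k[x]/(x^n)$; that is precisely the role of Theorem \ref{gusta}, which supplies the only nontrivial input beyond the machinery already developed for $B$-rings.
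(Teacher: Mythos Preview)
Your proof is correct and follows essentially the same route as the paper: pick $p$ with $p^2\mid|G|$, use Gustafson to find a non-symmetric block $S$ of $\overline{A(G)}$, rule out $S\simeq k[x]/(x^n)$ so that $\dim_k\M/\M^2\geq 2$, apply Gulliksen to get unbounded $\dim_k\Ext^l_S(k,k)$, and then feed this into the recurrence $a_{l+1}=b_l-a_l$. The only cosmetic differences are that you make the Nakayama step and the ``product of symmetric algebras is symmetric'' step explicit, and you take $J=H$ rather than an arbitrary $J$ with $(J)\in E$.
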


\begin{proof}
Let $p$ be a prime with $p^2 \mid |G|$ and put $k = \F_p$.
The algebra $\overline{A(G)} = A(G) \otimes_\Z k$ is not symmetric by Theorem \ref{gusta}, so it has an indecomposable $k$-algebra summand $S$ which is not symmetric. Now $S$ is finite-dimensional so the maximal ideal $\M$ of $S$ is nilpotent. Then $\M$ is the only prime of $S$ and $d(S) = 0$. It follows by Theorem \ref{gulli} that the sequence $(\dim\Tor^S_l(k, k))_{l \in \N}$ is unbounded if and only if $\dim \M / \M^2 >1$.

If $\dim \M / \M^2 = 0$ then $S = k$ is clearly symmetric. Suppose $\dim \M / \M^2 = 1$. Let $t \in S$ generate $\M$ and note that $\{1_S, t, \ldots , t^q\}$ is a vector space basis for $S$ for some $q \geq 1$. Define $\l: S \to k$ by putting
\[
\l\left(\sum_{i = 0}^q a_it^i\right) = a_q.
\]
Now if $J \subset S$ is a non-zero ideal then we can choose some non-zero element $s = \sum_{i = 0}^q b_it^i$ in $J$, and choose $m$ to be minimal such that $b_m \neq 0$. Then $t^{q-m}s = b_mt^q \in J$ is not in $\ker \l$, so $\ker \l$ contains no non-zero ideals and $S$ is symmetric, a contradiction.
It follows that $\dim \M / \M^2 > 1$ and the sequence $(\dim\Tor^S_l(k, k))_{l \in \N} = (\dim\Ext^l_S(k, k))_{l \in \N}$ is unbounded.

By Corollary \ref{veryusefulcor}, the summand $S$ corresponds to some equivalence class $E \in \mathcal{E}$, and we have 
\[
\dim\Ext^l_S(k, k) = \dim\Ext^l_{\overline{A(G)}}(k_E, k_E).
\]
Let $H, J$ be subgroups of $G$ with $(H), (J) \in E$,
let $\alpha_l$ be the $p$-rank of $\Ext^l_{A(G)}(\Z_H, \Z_J)$, and let $\beta_l$ be the dimension of $\Ext^l_{\overline{A(G)}}(k_E, k_E)$. We have a recurrence 
\[
\alpha_{l+1} = \beta_l - \alpha_l
\]
for $l\geq 1$.
Since $\beta_l$ is unbounded, it follows immediately that $\alpha_l$ is unbounded, and hence the groups $\Ext^l_{A(G)}(\Z_H, \Z_J)$ have unbounded rank.
\end{proof}

 {}

\end{document}